\newcommand{\var}{{\rm var}}
\newcommand{\cov}{{\rm cov}}
\newcommand{\pr}[1]{\mathbb{P}\left( #1\right)}
\newcommand{\mean}[1]{\mathbb{E}\left[ #1\right]}
\newcommand{\sta}[1]{\mathcal{T}^*_{#1}}
 \newcommand{\bbP}[1]{\mathbb{P}\left( #1\right)}
 \newcommand{\bbE}[1]{\mathbb{E}\left[#1\right]}
\newcommand{\e}[1]{\exp \left( #1\right)}
\newcommand{\bs}{\boldsymbol}
\newcommand{\lp}{\left(}
\newcommand{\rp}{\right)}
\newcommand{\iab}{I_{a,b}^i}
\newcommand{\ibc}{I_{b,c}^i}
\newcommand{\icd}{I_{c,d}^i}
\newcommand{\ione}{I_{1,2}^{i}}
\newcommand{\um}[1]{\"{#1}}
\newcommand{\pag}[1]{\mathcal{G}_{#1}}
\def\T{{\mathcal T}}
\newcommand{\bni}{B_{n,i}}
\newcommand{\length}[1]{G_{#1}}
\def\ov{\overline}
\def\a{\alpha} 
\def\eps{\varepsilon}
\newtheorem{theorem}{Theorem}
\newtheorem{lemma}[theorem]{Lemma}
\newtheorem{corollary}[theorem]{Corollary}
\newtheorem{proposition}[theorem]{Proposition}
\theoremstyle{remark}
\newtheorem*{remark}{\bf Remark}
\title{On a  memory game and  preferential attachment graphs}
\date{}
\author{H\"{u}seyin Acan\\ School of Mathematical Sciences\\ Monash University\\ Melbourne, VIC 3800\\ Australia\\ 
\texttt{huseyin.acan@monash.edu}
\and
Pawe{\l} Hitczenko
\thanks{This author was partially supported by a 
Simons Foundation grant \#208766. His work was carried out during a visit at Monash
University in the first half of 2014. 
He would like to thank the members of the School of Mathematical
Sciences and Nick Wormald in particular for  hospitality  and 
support. He would also like to thank Mark Wilson for suggesting the problem and exchanging some ideas about  it. Both authors would like to thank Andrew Barbour for a helpful conversation on the topic related to this paper.}\\
Department of Mathematics\\ Drexel University\\ Philadelphia, PA 19104 \\ USA\\ \texttt{phitczen@math.drexel.edu}
}
\begin{document}

\maketitle
\begin{abstract} 
In a recent paper Velleman and Warrington  analyzed the expected  values of some of the parameters in a  memory game, namely, the length of the game, the waiting time for the first match, and the number of lucky moves.  In this paper we continue this direction of investigation  and obtain the limiting distributions of those parameters. More specifically, we prove that when suitably normalized, these quantities converge in distribution to a normal, Rayleigh, and Poisson random variable, respectively. 

We also make a connection between the memory game and one of the models of preferential attachment graphs.  In particular, as a by--product of our methods we obtain simpler proofs (although without rate of convergence) of some of the results of Pek\"oz, R\"ollin, and Ross on the joint limiting distributions of the degrees of the first few vertices in preferential attachment graphs.

For proving that the length of the game is asymptotically normal, our main technical tool is a limit result for the joint  distribution of the number of balls in  a multi--type generalized P\'olya urn model.

\vspace{.5cm}
\noindent{\bf Keywords:} convergence in distribution, generalized P\'olya urn, preferential attachment graph, memory game.

\noindent{\bf 2010 AMS Subject Classification:} Primary: 60F05, Secondary 05C82, 60B12, 60C05.
\end{abstract}

\section{Introduction}\label{s:intro}

Consider the following game of memory played with a deck of cards, which has been studied by Velleman and Warrington~\cite{VW}. There are $2n$ cards with $n$ different labels, two cards for each label. They are shuffled and put in a row, all facing down. 
A player flips over two cards at each turn and if the cards match, she removes the cards from the deck; otherwise she flips them over again. The goal is to finish the game using the smallest number of moves. 
Each initial configuration of the cards corresponds to a permutation
of the multiset $\{1,1,2,2,\dots,n,n\}$. Following
Velleman and Warrington we assume that the player has a perfect memory and the initial configuration is given by a permutation chosen uniformly at random. 

The player uses the following optimal strategy. She starts from the
leftmost card and flips over the cards from left to right. Before
starting round $t$, if she knows the places of two identical cards,
i.e.\ if she flipped over one of them at round $t-1$ and one of them
before round $t-1$, then she removes the pair at round~$t$. Otherwise,
she flips over the first card that has not been flipped over yet. In
this case, if the matching card has been flipped over previously,
then she flips it over once again and removes the pair from the deck;
if not, she flips over the next unflipped card. Note that each card is
flipped over at least once and at most twice. Thus the player finishes
in at least $n$ rounds and at most $2n$ rounds.

We refer the reader to~\cite{VW} for more details and references about the game. In
particular, the authors of~\cite{VW} studied the expected values of three parameters: the expected length of the game (i.e.\ the total
number of moves), the expected number of flips till the first match,
and the expected number of lucky moves,
where a lucky move refers to a 
move at which two (necessarily consecutive) cards of the same label are flipped for the first (and the last) time in the same round. 

In this work
we take the analysis one step further and identify
the limiting distributions of those three parameters as the size of
the deck, $n$, increases to infinity;  see 
Theorems~\ref{cor:G_n},~\ref{thm:firstmatch}, and~\ref{thm:lucky}
 below  for  precise statements.  Furthermore, we describe below a connection between  the memory game and a version of a model of the preferential attachment graphs introduced in~\cite{BA}, mathematically formalized in~\cite{BRST}, and subsequently studied in a number of papers; see  e.g.~\cite{BBCS,BR04,R} and references therein. 
 Thus, our methods may be used to study the preferential attachment models. In particular, our  analysis allows us to find the asymptotic joint degree distribution of the first $k$ vertices of a preferential attachment graph, a result that was  originally obtained (in a more precise version) in~\cite{PRR2}.
 
This connection between the memory game and the preferential attachment graphs is made through {\em chord diagrams}, 
i.e.\  pairings of $2n$ points. 
To describe it,  we say that two games are equivalent if by relabeling the cards in one of them (matching cards receive the same labels) we can obtain the other one. 
Thus, each game is equivalent to a unique game in which the second occurrence of card $i$ precedes the second occurrence of card $j$ for $i<j$. Such games (called  standard in~\cite{VW})  correspond to chord diagrams.

Various enumeration problems about chord diagrams have been studied widely; see for example \cite{AP,CM,DP,FN,Rio75}. Besides their combinatorial significance, chord diagrams appear in various fields in mathematics, especially in topology. 
For detailed information about chord diagrams and their topological and algebraic significance we refer the reader to Chmutov, Duzhin, and Mostovoy's book~\cite{CDMbook}, and for several other applications, to the paper by Andersen et al.~\cite{Andetal13} and the references therein.

As another application, Bollob\'{a}s et al.\ \cite{BRST} used linearized chord diagrams to generate a preferential attachment random graph introduced by Barab\'{a}si and Albert \cite{BA}. This 
graph is obtained from a discrete time random graph process. First we describe the process as suggested by Bollob\'as et al.\  and then give their alternative way of obtaining the same graph. 
The process starts with $\pag{1}$, the graph with one vertex $v_1$ and  a loop on $v_1$. For $n>1$, $\pag{n}$ is obtained from $\pag{n-1}$ by creating a new vertex $v_n$ and adding a random edge $e$ incident to $v_n$, where 
\[
\pr{e=v_iv_n}= \begin{cases}
D_{\pag{n-1}}(v_i)/(2n-1) & \text{ if }1\le i \le n-1, \\
1/(2n-1) & \text{ if } i=n,
\end{cases}
\]
and where $D_{\pag{}}(v)$ denotes the degree of the vertex $v$ in the graph $\pag{}$. (A loop contributes 2 to the degree.)

Bollob\'as et al.\   proposed the following method to generate $\pag{n}$. On a line, take $2n$ points and pair them randomly. Connect the points in each pair by an arc (chord) above the line. This is a random linearized chord diagram, call it $C_n$. For each arc, there is a left and a right endpoint.
Proceeding from left to right, identify all endpoints up to and containing the first right endpoint to form the vertex $v_1$. Then, starting with the next endpoint, identify all the endpoints up to the second right endpoint to form $v_2$, and so on. Then, for each arc, create an edge (possibly a loop) between the vertices corresponding to the left and right endpoints of the arc; see Figure~\ref{fig:LCD and graph}. The graph $\pag{n+1}$ can be obtained from $\pag{n}$ as follows. Create one endpoint to the right of the last endpoint in $C_n$ and then create another endpoint in one of the $2n+1$ intervals, choosing the interval uniformly at random. Finally, join these two endpoints by an arc and modify the graph to obtain $\pag{n+1}$. 

Using this alternative description, Bollob\'as et al.~\cite{BRST} proved that this graph has power law degree distribution,
which had been previously observed by Barab\'{a}si and Albert \cite{BA} using experimental methods. (Power law degree distribution means that the number of vertices with degree $k$ is proportional to $nk^{-\gamma}$ as $n \to \infty$, for some $\gamma>0$. Bollob\'as et al.\ showed that $\gamma=3$ in particular.) Later Bollob\'as and Riordan~\cite{BR04} studied the diameter of $\pag{n}$ making use of linearized chord diagrams again.

\begin{figure}[t]
\centering 
\begin{tabular} {c c}
\begin{tikzpicture}[scale=1.2, line cap=round,line join=round,>=triangle 45,x=1.0cm,y=1.0cm]
\draw[gray] (-.5,0)--(6.5,0);
\foreach \x in {1,...,4}
{
\path (0.66*\x-.66,0)++ (0,0) coordinate (n\x);
\fill[red] (n\x) circle[radius=1.5pt] ++ (0,-.3) node {};
}
\foreach \x in {5}
{
\path (0.66*\x-.66,0)++ (0,0) coordinate (n\x);
\fill[blue] (n\x) circle[radius=1.5pt] ++ (0,-.3) node {};
}
\foreach \x in {6}
{
\path (0.66*\x-.66,0)++ (0,0) coordinate (n\x);
\fill[ForestGreen] (n\x) circle[radius=1.5pt] ++ (0,-.3) node {};
}
\foreach \x in {7,8,9}
{
\path (0.66*\x-.66,0)++ (0,0) coordinate (n\x);
\fill[Fuchsia] (n\x) circle[radius=1.5pt] ++ (0,-.3) node {};
}
\foreach \x in {10}
{
\path (0.66*\x-.66,0)++ (0,0) coordinate (n\x);
\fill[orange] (n\x) circle[radius=1.5pt] ++ (0,-.3) node {};
}
\foreach \from/\to in {n1/n4, n2/n5, n3/n6, n7/n9, n8/n10}
\path (\from) edge[bend left=90] (\to);
\end{tikzpicture}

\qquad

\begin{tikzpicture}
 [scale=.7,auto=left,every node/.style={ inner sep=2pt}, every loop/.style={min distance=20mm}]
\coordinate (n1) at (0,0) {};
\coordinate (n2) at (1,1) {};
\coordinate (n3)  at (2,0) {};
\coordinate (n4)   at (3,1) {};
\coordinate (n5) at (3,0) {};

\fill[blue] (n1) circle[radius=3pt] node[below] {$v_2$};
\fill[red] (n2) circle[radius=3pt] node[above] {$v_1$};
\fill[ForestGreen] (n3) circle[radius=3pt] node[below] {$v_3$};
\fill[Fuchsia] (n4) circle[radius=3pt] node[above] {$v_4$};
\fill[orange] (n5) circle[radius=3pt] node[below] {$v_5$};

\foreach \from/\to in {n1/n2, n2/n3, n4/n5}
  \draw (\from) -- (\to);
  \path (n2) edge[loop] (n2);
  \path (n4) edge[loop] (n4);
\end{tikzpicture}
\end{tabular}
\caption{A linearized chord diagram and the corresponding graph.}
\label{fig:LCD and graph}
\end{figure}
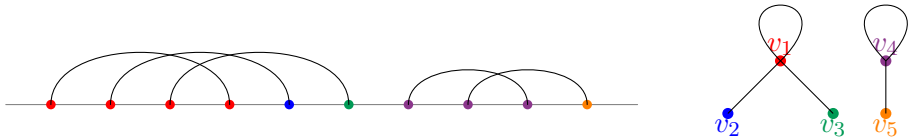

Recently, Pek\"{o}z et al.~\cite{PRR} studied the degree of a fixed vertex in $\pag{n}$. In particular, they found
an asymptotic distribution for $D_{\pag{n}}(v_i)$, along with a rate of convergence, as $n\to \infty$. In a more recent work~\cite{PRR2}, they extended their result to the joint distribution of the first $k$ vertices. One consequence of our approach is 
a simpler proof for the asymptotic joint degree distribution of the first $k$ vertices for any fixed $k$ using chord diagrams and the above description of $\pag{n}$.

In the next section we will introduce our notation and present our main results. In the subsequent sections, we will give the proofs of these results.

\section{Main results}
\label{sec:main}

The \emph{size} of a memory game is the number of pairs of cards a
player starts with. Suppose that we have an infinite number of cards
labeled $1,2,\dots$ in pairs, and each pair has a blue copy and a
red copy. The colors are only on one side, and when a card is face
down  one cannot see its color. A game of size $n$ is simply a permutation of the first $n$ pairs of cards. 
Since the colors are not essential for the game, we may assume that a
blue card always precedes the red card with the same label. Let $\T_n$
denote the set of such permutations with $n$ pairs. Thus the
cardinality of $\T_n$ is $(2n)!/2^n$. Also let $\sta{n}$ denote the
subset of $\T_n$ such that the red card with label $i$ comes before the
red card with label $i+1$ for every $i\in [n-1]$.
Following~\cite{VW}, we call $\sta{n}$ 
the set of \emph{standard deals} (denoted by $M_n$ in \cite{VW}).  It is easy to see that the cardinality of $\sta{n}$ is the product of odd positive integers 1 through $2n-1$.

A permutation $\sigma \in \T_n$ can be standardized by relabeling the pairs of cards so that the red cards appear in increasing order. Such a relabeling does not change the game's characteristics that we are interested in. Note that there is a unique standard deal corresponding to a permutation in $\T_n$ and conversely, there are $n!$ permutations equivalent to a given standard deal in $\sta{n}$. A (uniformly) random game corresponds to a (uniformly) random element of $\T_n$ or $\sta{n}$. We are free to choose either interpretation for our analyses.

Red cards partition a permutation in $\T_n$ into $n$ disjoint blocks. Each block consists of consecutive cards in the permutation, and for $1\le k\le n$, the $k$-th block ends with the $k$-th red card in the permutation.
The \emph{length of a block} is the number of cards it contains. We denote by $B_{n,i}$ the number of blocks of length $i$ in a random game with $n$ pairs of cards. We have
\[
\sum_{i\ge 1}B_{n,i}=n
\]
and $B_{n,i}=0$ for every $i>n+1$. 

We will use the symbol $(n)_k$ for the falling factorial. In particular, $(i+2)_3=i(i+1)(i+2)$ will appear frequently throughout the paper.
The following result gives the joint asymptotic distribution of the block counts and is crucial for our analysis. 
\begin{theorem}\label{thm:joint} 
As $n\to\infty$  
\begin{equation}\label{eqn:joint}\frac1{\sqrt
 n}\left(B_{n,i}-\frac{4n}{(i+2)_3}\right)_{i=1}^\infty\stackrel
d\to \big(W_i\big)_{i=1}^\infty,\end{equation}
where $(W_i)$ are jointly Gaussian with mean zero and covariance
matrix $\Sigma=[\sigma_{ij}]$ given by 
\begin{equation}\label{eqn:cov}
\sigma_{ij}=\left\{
\begin{array}{ll}\frac{16}{(i+2)_3(j+2)_3}-\frac{24}{(i+j+2)_4},&
\mbox{\ if\ }i\ne
j;\\ &\\ \frac4{(j+2)_3}+\frac{16}{(j+2)_3^2}-\frac{24}{(2j+2)_4},&\mbox{\
  if \ }i=j.\end{array}
\right.
\end{equation}
\end{theorem}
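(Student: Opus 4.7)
The strategy is to view the profile $(B_{n,i})_{i\ge 1}$ as the color-count vector of a generalized P\'olya urn, invoke a multi-type CLT for such urns, and then pass to the infinite-dimensional limit. I would first use the linearized chord diagram construction described in the introduction: to pass from $C_n$ to $C_{n+1}$, insert a new left endpoint uniformly into one of the $2n+1$ gaps and append a new right endpoint at the right. Given block lengths $L_1,\dots,L_n$, the new left endpoint lies inside (or just before) the $k$-th block with probability $L_k/(2n+1)$, in which case that block grows by one and a fresh trailing block of length $1$ is created; with the remaining probability $1/(2n+1)$ the insertion lands past the last point, producing a single new block of length $2$ and leaving the rest untouched.

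These rules describe a generalized P\'olya urn whose colors are the possible block lengths. A ball of color $i\ge 1$ is assigned activity $i$, together with an immortal phantom ball of activity $1$; drawing color $i\ge 1$ removes that ball, adds one of color $i+1$, and adds one of color $1$, while drawing the phantom adds one of color $2$. Truncating to colors $\{1,\dots,M\}$ (lumping $\{i>M\}$ into an auxiliary class) gives a finite, balanced urn. I would then check the hypotheses of the multi-type generalized P\'olya urn CLT advertised in the abstract: the leading eigenvalue of the mean-replacement matrix is $\lambda_1=2$ (each draw raises the total activity by $2$) and simple, while the rest of the spectrum lies strictly below $\lambda_1/2=1$. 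This yields $\sqrt n$-scale joint Gaussian fluctuations of $(B_{n,1},\dots,B_{n,M})$ about their means.

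Two concrete computations remain. The limiting means $\mu_i=4/(i+2)_3$ follow from the recursion $(i+2)\mu_i=(i-1)\mu_{i-1}$ for $i\ge 2$, obtained from the drift equation
$$E[B_{n+1,i}-B_{n,i}]=\frac{(i-1)\,E B_{n,i-1}-i\,E B_{n,i}+\mathbb{1}[i=2]}{2n+1},$$
with the base case $\mu_1=2/3$ from the $i=1$ equation. For the covariances $\sigma_{ij}$, I would write analogous drift equations for $E[B_{n,i}B_{n,j}]$, solve the resulting linear recursion, and read off the coefficient of $n$; the urn CLT guarantees that this coincides with the covariance of the Gaussian limit. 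To upgrade joint convergence of $(B_{n,1},\dots,B_{n,M})$ for each fixed $M$ to the product-space convergence of the whole sequence in \eqref{eqn:joint}, I would combine the Cram\'er--Wold device with consistency of the marginals across $M$.

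The main obstacle will be the infinite dimensionality. Truncating at level $M$ distorts the dynamics inside the lumped class (an internal $M\to M+1$ transition becomes an invisible self-replacement), so one has to verify that the mean and covariance for the first $M$ coordinates of the truncated urn genuinely match those of the untruncated process in the limit, and that the spectral gap is uniform in $M$. Once this is in place, confirming the explicit formula \eqref{eqn:cov} is essentially a bookkeeping exercise, but the subtraction $-24/(i+j+2)_4$ and the extra diagonal term $4/(j+2)_3+16/(j+2)_3^2$ require careful accounting because the $+1$ trailing-block creation couples every coordinate to $B_{n,1}$.
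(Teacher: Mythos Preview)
Your proposal is essentially the same approach as the paper's: model the block profile as a generalized P\'olya urn with an extra ``phantom''/immigration ball, truncate to finitely many colors, verify the spectral gap $\lambda_1=2>2\operatorname{Re}\lambda_2$, and invoke a multi-type urn CLT. The paper executes this by citing two off-the-shelf results rather than working from scratch: Theorem~2.1 of Zhang, Hu, Chung and Chan handles the immigration ball (in their framework the immigration vector is time-independent and non-random, so immigration is asymptotically negligible and one reduces to the urn without it), and Janson's general theory for finite-color urns, together with his ``superball trick'' for the truncation, supplies both the CLT and an explicit covariance formula.

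The one place you diverge is the covariance computation. The paper does \emph{not} derive $\sigma_{ij}$ from drift equations for $\bbE{B_{n,i}B_{n,j}}$ as you suggest; instead it reads off Janson's general covariance expression, which here takes the form
\[
\sigma_{ij}=2\sum_{k=0}^{i-1}\sum_{l=0}^{j-1}\frac{(-1)^{k+l}}{k+l+4}\binom{i-1}{k}\binom{j-1}{l}\left(\frac{2(k+l+4)!}{(k+3)!(l+3)!}-1-\frac{(k+1)(l+1)}{(k+3)(l+3)}\right),
\]
and then appeals to a separate note (Acan--Hitczenko) for the simplification to \eqref{eqn:cov}. Your moment-recursion route would work in principle, but the system couples every pair $(i,j)$ to $(i\pm1,j)$, $(i,j\pm1)$ and to the diagonal via the trailing-block creation, so solving it explicitly is substantially more work than quoting Janson and simplifying. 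The ``main obstacle'' you flag---consistency of the truncated and untruncated dynamics on the first $M$ coordinates---is exactly what Janson's superball trick is designed to handle, and the paper simply inherits it.
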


The following theorem gives us the total number of even length blocks. This is closely related to the \emph{length of the game} defined as the minimum number of moves required to finish the game. 

\begin{theorem}\label{thm:main}
Let $Y_n=\sum_{i\ge1}B_{n,2i}$. Then, as $n\to\infty$
\begin{equation}\label{joint_norm}\frac1{\sqrt{n}}\left(Y_n-(3-4\ln2)n\right)\stackrel d\to N(0,\sigma^2),\end{equation}
where $\sigma^2=(4\ln2)^2+8\ln2-13\sim0.2324$.
\end{theorem}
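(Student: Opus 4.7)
The plan is to deduce \eqref{joint_norm} from Theorem~\ref{thm:joint} via truncation, a tail estimate on the second moment, and an explicit evaluation of the limit variance. First I would center: applying the partial-fraction identity $\frac{4}{k(k+1)(k+2)} = \frac{2}{k} - \frac{4}{k+1} + \frac{2}{k+2}$ at $k = 2i$ and using $H_N - H_{2N+1} \to -\ln 2$ gives $\sum_{i\ge 1} \frac{4}{(2i+2)_3} = 3 - 4\ln 2$, so that
\begin{equation*}
Y_n - (3-4\ln 2)n \;=\; \sum_{i\ge 1}\left(B_{n,2i} - \frac{4n}{(2i+2)_3}\right).
\end{equation*}

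Next, for each fixed $K$, the map $(x_i)_{i\ge 1} \mapsto \sum_{i=1}^K x_{2i}$ is continuous on $\mathbb{R}^{\infty}$ with the product topology, so Theorem~\ref{thm:joint} and the continuous mapping theorem give
\begin{equation*}
\frac{1}{\sqrt n}\sum_{i=1}^K\left(B_{n,2i} - \frac{4n}{(2i+2)_3}\right) \stackrel d\to Z_K := \sum_{i=1}^K W_{2i},
\end{equation*}
a centered Gaussian with variance $s_K^2 = \sum_{i,j=1}^K \sigma_{2i,2j}$. Substituting \eqref{eqn:cov} and collecting the diagonal correction into the off-diagonal formula gives the clean decomposition
\begin{equation*}
s_K^2 \;=\; \sum_{i=1}^K \frac{4}{(2i+2)_3} \;+\; \bigg(\sum_{i=1}^K \frac{4}{(2i+2)_3}\bigg)^{\!2} \;-\; \sum_{i,j=1}^K \frac{24}{(2i+2j+2)_4}.
\end{equation*}
As $K\to\infty$, the first two sums tend to $(3-4\ln 2) + (3-4\ln 2)^2$. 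In the third, the change of variables $k = i+j$ reduces the double sum to $\sum_{k\ge 2}\frac{24(k-1)}{(2k+2)_4}$, which is a standard partial-fraction computation and (by design) combines with the other pieces to give $s_K^2 \to 16(\ln 2)^2 + 8\ln 2 - 13 = \sigma^2$; in particular $Z_K \stackrel d\to N(0,\sigma^2)$.

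The main obstacle is upgrading this truncated CLT to a statement about the full sum $Y_n$, i.e.\ verifying the uniform tail estimate
\begin{equation*}
\lim_{K\to\infty}\limsup_{n\to\infty}\frac{1}{n}\var\!\bigg(\sum_{i>K} B_{n,2i}\bigg) \;=\; 0.
\end{equation*}
Since Theorem~\ref{thm:joint} is only a finite-dimensional statement, I would need a non-asymptotic second-moment bound, namely $\var(B_{n,i}) \le C n/(i+2)_3$ valid for all $n,i$. Such a bound is consistent with the limiting diagonal entries $\sigma_{ii}\sim 4/(i+2)_3$ and should be obtainable either from the second-moment estimates in the generalized P\'olya urn representation underlying Theorem~\ref{thm:joint}, or by a direct combinatorial argument on block counts. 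Granting it, Cauchy--Schwarz yields $\var(\sum_{i>K} B_{n,2i}) \le \bigl(\sum_{i>K} \sqrt{Cn/(2i+2)_3}\bigr)^{2} = O(n/K)$, which is the required bound.

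Finally, assembling the ingredients: the truncated CLT, the convergence $Z_K \stackrel d\to N(0,\sigma^2)$, and the Chebyshev estimate $\pr{|Y_n - Y_n^{(K)}|/\sqrt n > \varepsilon} = O(1/(K\varepsilon^2))$ coming from the tail control, combine through the standard approximation lemma (e.g.\ Theorem~3.2 of Billingsley's \emph{Convergence of Probability Measures}) to give \eqref{joint_norm}.
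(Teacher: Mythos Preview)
Your overall architecture matches the paper's: truncate, apply Theorem~\ref{thm:joint} to the finite partial sum, control the tail, and assemble via the standard approximation lemma. Your variance calculation $s_K^2\to\sigma^2$ is correct (and tidier than the paper's version). The gap is in the tail control. You posit a uniform bound $\var(B_{n,i})\le Cn/(i+2)_3$ valid for \emph{all} $i$ and $n$, and say it ``should be obtainable.'' The paper does not have this: Lemma~\ref{lem:varB_i<} only gives $\var(B_{n,i})=O(n/i^3)$ for $i=o(n^{1/3})$, because the second-moment computation (see \eqref{n4pi}) carries an error factor $1+O(i^2/n)$ that is useless beyond that range. The paper's workaround is a two-level split of the tail at a cutoff $i_n\sim n^{1/3-\eta}$: for $N<i\le i_n$ it uses Chebyshev with Lemma~\ref{lem:varB_i<} and Cauchy--Schwarz on the covariances, exactly as you propose; but for $i>i_n$ it abandons second moments and uses Markov's inequality together with the first-moment bound \eqref{E[B_i]<}, which \emph{is} valid for all $i$. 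Without this second step your Cauchy--Schwarz estimate on $\var\big(\sum_{i>K}B_{n,2i}\big)$ is unjustified once the sum ranges past $n^{1/3}$, so the argument as written is incomplete.

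A smaller point: you center at $4n/(2i+2)_3$ rather than at $\mean{B_{n,2i}}$, and then invoke Chebyshev via $\var\big(\sum_{i>K}B_{n,2i}\big)$. That controls deviation of the tail from its \emph{mean}, not from zero, so you also need $\frac1{\sqrt n}\sum_{i>K}\big(\mean{B_{n,2i}}-4n/(2i+2)_3\big)\to 0$. The paper handles this by first proving $\sum_{i\ge1}\mean{B_{n,2i}}=(3-4\ln 2)n+O(\log n)$ from Proposition~\ref{lem:E[B_i]<>} and then working throughout with the centered variables $\ov B_{n,2i}=B_{n,2i}-\mean{B_{n,2i}}$, so that the tail has mean exactly zero. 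You would need to insert the analogous step.
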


Denote by $\length{n}$ the length of a random game with $n$ pairs of cards. Using the previous theorem, we obtain the distribution of $\length{n}$, which is stated next.
\begin{corollary}\label{cor:G_n}
As $n \to \infty$, we have
\[
\frac{\length{n}-(3-2\ln2)n}{\sqrt n}\stackrel d\to N\left(0,\sigma^2/4\right),
\]
where $\sigma^2$ is as in Theorem~\ref{thm:main}.
\end{corollary}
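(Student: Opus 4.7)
The plan is to establish the deterministic identity
\[
\length{n}=\frac{3n+Y_n}{2}-L_n,
\]
where $L_n$ denotes the number of lucky moves, and then pass to the limit by combining Theorem~\ref{thm:main} with Slutsky's theorem.

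To prove the identity I analyze the game block by block, writing $\ell_k$ for the length of the $k$-th block. Since the blue copy of each label precedes the red copy in the deck, the position of the blue card matching the $k$-th red card $R_k$ is already stored in memory by the time the player's cursor first reaches $R_k$; moreover any pending pair generated at the very end of block $k-1$ is cleared by a single Type I move before block $k$'s substantive processing begins, so each block can be analyzed in isolation. Within block $k$ the $\ell_k-1$ blue cards are exposed two at a time in Type III moves, and the key question is whether $R_k$ ends up as the first or the second flip of a move. If $\ell_k$ is odd then $R_k$ is the first flip and triggers a Type II completion, using $(\ell_k+1)/2$ moves in total. If $\ell_k$ is even then $R_k$ is the second flip: the move is a lucky Type III precisely when the first flip of the same move happens to reveal the matching blue card (so block $k$ uses $\ell_k/2$ moves), and otherwise it is Type III non-lucky with a pending pair cleared by one extra Type I move (so block $k$ uses $\ell_k/2+1$ moves).

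Summing over $k$ and using $\sum_k\ell_k=2n$ together with the fact that the number of odd-length blocks equals $n-Y_n$ gives $\sum_k\lceil\ell_k/2\rceil=(3n-Y_n)/2$; adding one extra move for each of the $Y_n-L_n$ non-lucky even blocks yields the identity. Substituting it into the left-hand side of the corollary,
\[
\frac{\length{n}-(3-2\ln2)n}{\sqrt n}=\frac12\cdot\frac{Y_n-(3-4\ln2)n}{\sqrt n}-\frac{L_n}{\sqrt n}.
\]
By Theorem~\ref{thm:main} the first summand converges in distribution to $N(0,\sigma^2/4)$, while Velleman and Warrington's estimate $\bbE{L_n}=O(1)$ from~\cite{VW} together with Markov's inequality forces $L_n/\sqrt n\to0$ in probability. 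Slutsky's theorem then completes the proof.

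The main obstacle is the case analysis underlying the identity: one has to track both the parity of each $\ell_k$ and whether the blue card matching $R_k$ lies at the penultimate position of block $k$, and make sure that the optional Type I clean-up move is charged to exactly one block, namely the one in which its pending pair was created.
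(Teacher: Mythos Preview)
Your proof is correct and follows essentially the same route as the paper: both rely on the identity $\length{n}=\tfrac32 n+\tfrac12 Y_n-L_n$ and then combine Theorem~\ref{thm:main} with the negligibility of $L_n/\sqrt n$. The only differences are cosmetic: the paper simply cites~\cite[Section~4]{VW} for the identity whereas you rederive it via the block-by-block case analysis, and the paper invokes Theorem~\ref{thm:lucky} (Poisson convergence, hence tightness) for $L_n$ while you use the more elementary route $\bbE{L_n}=O(1)$ plus Markov.
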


\begin{proof}

This follows immediately from Theorem~\ref{thm:main} and the fact that the length of the game, $\length{n}$, satisfies 
\[
\length{n} = \frac32n+\frac12\sum_{i\ge1}B_{n,2i}- L_n
\]
as observed by Velleman and Warrington~\cite[Section~4]{VW}. Here $L_n$ denotes the number of lucky moves (defined in~\cite{VW} and also below) and by Theorem~\ref{thm:lucky} $L_n$ is bounded in probability.
\end{proof}

Denote by  $D_{n,i}$ the length of the $i$-th block in a random game. In the random preferential attachment graph described in Section~\ref{s:intro}, $D_{n,i}$ corresponds to the degree of the $i$-th vertex. The distribution of $D_{n,i}$ is given by Pek\um{o}z et al.\ \cite{PRR}. The distribution of $D_{n,1}$ is given in the following theorem. In Section~\ref{s:firstblock} we provide a short proof for this theorem.

\begin{theorem}\label{thm:firstmatch}
Let $X$ be a standard Weibull random variable with parameter 2, i.e.\ a
random variable whose probability density function is $2xe^{-x^2}$
if $x>0$ and 0 otherwise. Then as $n\to\infty$
\begin{equation}\label{eqn:1st_match} \frac{D_{n,1}}{2\sqrt n}\stackrel d\longrightarrow X,\end{equation}
where $\stackrel d\longrightarrow$ denotes the convergence in
distribution.
\end{theorem}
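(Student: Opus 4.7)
The plan is to derive a closed-form expression for the tail probability $\bbP{D_{n,1}>k}$ from the chord-diagram description of Section~\ref{s:intro}, and then extract the limit by an elementary asymptotic expansion. Under a uniform random pairing of $2n$ points on a line, $D_{n,1}$ equals the position of the first right endpoint, so $\{D_{n,1}>k\}$ is precisely the event that each of positions $1,\dots,k$ is paired with some point in $\{k+1,\dots,2n\}$. The number of such pairings is $\tfrac{(2n-k)!}{(2n-2k)!}(2n-2k-1)!!$ (select ordered partners for positions $1,\dots,k$ from $\{k+1,\dots,2n\}$, then pair the remaining $2n-2k$ points freely), and dividing by $(2n-1)!!=(2n)!/(2^n n!)$ gives
\[
\bbP{D_{n,1}>k}=\frac{2^k\,(n)_k}{(2n)_k}=\prod_{j=0}^{k-1}\Bigl(1-\frac{j}{2n-j}\Bigr),
\]
valid for $0\le k\le n$, where $(m)_k=m(m-1)\cdots(m-k+1)$.

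Fixing $x>0$ and setting $k_n=\lfloor 2x\sqrt n\rfloor$, the bound $j/(2n-j)=O(n^{-1/2})$ holds uniformly in $j<k_n$, so the expansion $\log(1-u)=-u+O(u^2)$ yields
\[
\log\bbP{D_{n,1}>k_n}=-\sum_{j=0}^{k_n-1}\frac{j}{2n-j}+O\!\left(\frac{k_n^3}{n^2}\right)=-\frac{k_n(k_n-1)}{4n}+O(n^{-1/2})\longrightarrow-x^2,
\]
where in the middle step I used $\tfrac{j}{2n-j}=\tfrac{j}{2n}+O(j^2/n^2)$ and summed. Therefore $\bbP{D_{n,1}/(2\sqrt n)>x}\to e^{-x^2}$ for every $x>0$, which is the survival function of the Weibull distribution with density $2xe^{-x^2}\mathbf 1_{\{x>0\}}$; since that limit is continuous, pointwise tail convergence is equivalent to the convergence in distribution asserted in~\eqref{eqn:1st_match}.

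The approach presents no substantial obstacle: the combinatorial count is immediate and the expansion is one-variable Laplace-type. The only technical point is controlling the Taylor remainder uniformly over the summation range $j<k_n$, which is routine once one observes that $k_n=O(\sqrt n)$ forces every error term to be $O(n^{-1/2})$. The chief merit of this proof is its directness: it recovers the limit from~\cite{PRR} without any Stein-type machinery, at the cost of giving no rate of convergence.
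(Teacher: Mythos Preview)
Your proof is correct and follows essentially the same route as the paper: derive an exact combinatorial expression for the distribution of $D_{n,1}$ and then apply the falling-factorial asymptotics of Lemma~\ref{lem:fallfact}. The only difference is cosmetic: the paper counts in $\T_n$ to obtain the point mass $\bbP{D_{n,1}=t}=\tfrac{t-1}{2n-t+1}\cdot\tfrac{2^{t-1}(n)_{t-1}}{(2n)_{t-1}}$ and then matches it to the limiting density, whereas you count pairings to get the survival function $\bbP{D_{n,1}>k}=\tfrac{2^k(n)_k}{(2n)_k}$ directly (the two formulas are of course equivalent by differencing). Working with the tail is marginally cleaner, since the passage from pointwise convergence of the survival function to convergence in distribution is immediate, while the paper's local-limit statement requires the extra Riemann-sum step it carries out afterward.
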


The next theorem is a generalization of the previous theorem. This result recovers, albeit without any error bound, the result of Pek\um{o}z et al.\ \cite[Theorem 1.1]{PRR2}.

\begin{theorem}\label{thm:firstkmatch}
As $n\to \infty$, for any fixed positive integer $k$,
\[
\lp \frac{D_{n,1}}{2\sqrt n},\dots, \frac{D_{n,k}}{2\sqrt n} \rp \stackrel d\longrightarrow (X_1,\dots,X_k),
\]
where the joint density function of $(X_1,\dots,X_k)$ is given by
\[
f(x_1,\dots,x_k)= 
\begin{cases}
2^k\, x_1(x_1+x_2)\cdots(x_1+\cdots+x_k) e^{-(x_1+\cdots+x_k)^2} & \text{if } x_1,\dots,x_k \ge 0\,,	\\
0 & \text{otherwise}\,. 
\end{cases}
\]
\end{theorem}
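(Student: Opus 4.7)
The plan is to compute the joint probability mass function of $(D_{n,1},\dots,D_{n,k})$ exactly from the linearized chord diagram description, and then carry out a local--limit style asymptotic analysis under the scaling $d_i=2x_i\sqrt n$. Let $R_i:=D_{n,1}+\cdots+D_{n,i}$, so that $R_i$ records the position of the $i$-th right endpoint of the random chord diagram. The event $\{R_i=r_i:1\le i\le k\}$ with $r_1<\cdots<r_k$ is precisely the event that the right endpoints among positions $\{1,\dots,r_k\}$ are exactly $r_1,\dots,r_k$.

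A direct combinatorial count of the pairings satisfying this constraint gives
\[
\pr{R_1=r_1,\dots,R_k=r_k}=\frac{\prod_{i=1}^{k}(r_i-2i+1)\cdot(2n-r_k)_{r_k-2k}\cdot(2n-2r_k+2k-1)!!}{(2n-1)!!}.
\]
Indeed, $r_i$ has $r_i-2i+1$ admissible mates in $\{1,\dots,r_i-1\}$ (after excluding $r_1,\dots,r_{i-1}$ and the mates already chosen for them); the $r_k-2k$ leftover positions in $\{1,\dots,r_k\}$ are left endpoints that must be matched injectively into $\{r_k+1,\dots,2n\}$; and the remaining $2n-2r_k+2k$ positions past $r_k$ pair among themselves arbitrarily.

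Next I substitute $r_i=\lfloor 2s_i\sqrt n\rfloor$ with $s_i=x_1+\cdots+x_i$ and $x_i>0$ fixed, and analyse the three factors. The numerator product yields $\prod_{i=1}^{k}(r_i-2i+1)\sim 2^k s_1\cdots s_k\,n^{k/2}$. For the remaining ratio, I write each factor as $(2n)\bigl(1-\xi_j/(2n)\bigr)$ and Taylor expand the logarithm to second order; because $r_k=O(\sqrt n)$ the third-order remainder is $O(n^{-1/2})$, and the arithmetic-progression identity $\sum_{j=0}^{N-1}(a+j)=Na+N(N-1)/2$ makes the bookkeeping clean, yielding
\[
(2n-r_k)_{r_k-2k}\cdot\frac{(2n-2r_k+2k-1)!!}{(2n-1)!!}\sim(2n)^{-k}\exp(-s_k^2).
\]
Multiplying, the joint PMF at $(r_1,\dots,r_k)$ is asymptotic to $n^{-k/2}s_1\cdots s_k\,e^{-s_k^2}$, and multiplying by the Jacobian $(2\sqrt n)^k$ of the rescaling $d\mapsto d/(2\sqrt n)$ produces exactly the target density $f(x_1,\dots,x_k)=2^k x_1(x_1+x_2)\cdots(x_1+\cdots+x_k)e^{-(x_1+\cdots+x_k)^2}$.

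To convert pointwise convergence of lattice densities into weak convergence of the rescaled vectors I would invoke a Scheff\'e-type argument: $f$ integrates to one (a short direct check via the change of variables $s_i=x_1+\cdots+x_i$), the prelimit objects are nonnegative, and the Gaussian factor $e^{-s_k^2}$ furnished by the calculation provides a uniform tail bound that prevents any escape of mass at infinity. The main technical obstacle lies here: one must make the ``$\sim$'' above uniform on compact subsets of the open positive orthant, and tight enough in the tails to justify $L^1$ convergence of densities. The explicit PMF formula combined with the elementary log-expansion reduces this to careful but essentially routine bookkeeping, from which the weak convergence in the statement follows at once.
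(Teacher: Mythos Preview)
Your proposal is correct and follows essentially the same route as the paper. The paper counts in the labeled model $\T_n$ (choosing the $k$ red cards, their blue partners, and the remaining blue cards before position $t$), while you count in the chord--diagram model $\sta{n}$; the resulting exact PMFs coincide, namely
\[
\pr{D_{n,1}=t_1,\dots,D_{n,k}=t_k}=\frac{2^{t-k}(n)_{t-k}}{(2n)_t}\prod_{i=1}^{k}(r_i-2i+1),\qquad r_i=t_1+\cdots+t_i,\ t=r_k,
\]
and both proofs then substitute $r_i\sim 2s_i\sqrt n$ and use Lemma~\ref{lem:fallfact}-type expansions to obtain the local limit. Your Scheff\'e justification for passing from pointwise density convergence to weak convergence is a welcome addition that the paper leaves implicit.
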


A \emph{lucky move} is a move at which we pick two matching cards by mere chance. This happens when two matching cards are next to each other and neither of these two cards has been flipped over before  this move. We denote by $L_n$ the number of lucky moves in a random game with $n$ pairs of cards.

\begin{theorem}\label{thm:lucky}
As $n\to\infty$, the number of lucky moves $L_n$ converges in distribution to a Poisson random variable with parameter $\ln 2$, that is,
\[
L_n\stackrel d\to Pois(\ln2).
\]
\end{theorem}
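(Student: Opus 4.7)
The plan is to prove Poisson convergence via the method of factorial moments. To begin, I characterise lucky moves combinatorially: viewing the permutation as a chord diagram (left endpoints = blues, right endpoints = reds, blocks between consecutive right endpoints), the player's ``seen prefix'' advances per round by $1$ if the next card is red and by $2$ if it is blue. A lucky move at positions $(p-1,p)$ requires both cards there to be new and flipped in the same round, which forces the seen prefix to reach $p-2$ at the start of some round. Working block by block, the seen prefix visits $r_k-2$ iff the block of length $L_k=r_k-r_{k-1}$ is even. Hence
\[
L_n=\sum_{p=2}^{2n}J_p,\qquad J_p:=\mathbf{1}\bigl[(p-1,p)\text{ is a chord and the block ending at }p\text{ has even length}\bigr].
\]

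Next, compute $\mathbb{E}[L_n]$. Condition on the chord $(p-1,p)$ (probability $1/(2n-1)$); the remaining $2n-2$ points form a uniform chord diagram. The even-length condition becomes: the trailing run of left-endpoints at position $p-2$ has even length. A direct count of chord diagrams with $k$ consecutive left-endpoints gives
\[
\Pr\bigl[\text{positions } p-k-1,\dots,p-2\text{ are left-endpoints}\bigr]=\prod_{j=0}^{k-1}\frac{2n-p-j}{2n-2j-3},
\]
which tends to $(1-y)^k$ when $p/(2n)\to y\in(0,1)$. The alternating identity $\Pr[T\text{ even}]=\sum_{k\ge 0}(-1)^k\Pr[T\ge k]$ for the trailing-run length $T$ then gives $\sum_{k\ge 0}(-1)^k(1-y)^k=(2-y)^{-1}$, so $\Pr[J_p=1]\sim (2n(2-y))^{-1}$ and a Riemann sum passage yields $\mathbb{E}[L_n]\to\int_0^1 dy/(2-y)=\ln 2$.

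For the $r$-th factorial moment, the same analysis applied to $r$ distinct positions $p_1<\cdots<p_r$ leaves, after conditioning on the $r$ chords $(p_i-1,p_i)$, a uniform chord diagram on $2n-2r$ points. The trailing-L-run event at $p_i-2$ depends only on a short neighbourhood to the left of $p_i$, so for tuples with pairwise gaps growing with $n$ these events decouple and each contributes a factor $(2-y_i)^{-1}$; tuples with small gaps form a vanishing fraction of the sum. Therefore
\[
r!\sum_{p_1<\cdots<p_r}\Pr\bigl[J_{p_1}=\cdots=J_{p_r}=1\bigr]\longrightarrow \Big(\int_0^1 \frac{dy}{2-y}\Big)^{\!r}=(\ln 2)^r,
\]
which is the $r$-th factorial moment of $\mathrm{Pois}(\ln 2)$; by the method of moments, $L_n\stackrel d\to\mathrm{Pois}(\ln 2)$.

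The main obstacle will be a rigorous proof of this asymptotic independence. Concretely, I would derive an explicit product formula for the joint probability $\Pr[J_{p_1}=\cdots=J_{p_r}=1]$ in terms of the positional gaps, verify that it factorises up to a $1+O(1/n)$ relative error when every gap $p_{i+1}-p_i$ exceeds $\log n$ (say), and separately bound the contribution of tuples violating this gap condition by $o(1)$, using that the number of such tuples is a vanishing fraction of all ordered tuples in $\{2,\dots,2n\}^r$.
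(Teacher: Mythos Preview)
Your approach is sound and takes a genuinely different route from the paper's. Both arguments go through factorial moments, but the paper uses a \emph{dynamic} viewpoint: it generates the deal by inserting pairs one at a time and writes $\mathbb{E}[(L_n)_k]=(n)_k\Pr[\mathcal{E}]$, where $\mathcal{E}$ is the event that the last $k$ inserted pairs are all lucky. This reduces to counting ``good intervals'' in the $(n-k)$-pair deal (intervals into which an adjacent inserted pair becomes lucky); the count $s$ is expressed in terms of the block-length statistics $B_{n-k,j}$, and the paper then invokes the expectation and concentration estimates of Section~\ref{sec:expectedblocksizes} (Proposition~\ref{lem:E[B_i]<>} and Corollary~\ref{cor:concentration B_i}) to show $s/n\to 2\ln 2$ in probability, whence $\mathbb{E}[(L_n)_k]\to(\ln 2)^k$.

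Your route is static and more self-contained: you fix the diagram, use positional indicators $J_p$, condition on the short chord $(p-1,p)$, and capture the even-block condition via the trailing run of left endpoints at $p-2$. This sidesteps the block-count machinery entirely, trading it for explicit product formulas and a decoupling argument. The price is that the asymptotic-independence step must be done carefully. One point worth tightening: after conditioning on chords $(p_i-1,p_i)$ for $i=1,\dots,r$, each $p_{i-1}$ is a red endpoint in the \emph{full} diagram, so the trailing run determining the block at $p_i$ is automatically capped at length $p_i-2-p_{i-1}$; the relevant event is thus not literally ``trailing run in the reduced diagram is even'' but rather that parity statement capped at the previous conditioned red. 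Once gaps exceed $\log n$ this cap is hit with probability $O((1-y_i)^{\log n})=o(1)$ and your factorisation plan goes through; the small-gap tuples are indeed $o(1)$ of the total, as you say. With that adjustment your outline is correct. Compared with the paper, your argument is longer to execute in isolation but does not rely on the variance bounds of Section~\ref{sec:expectedblocksizes}; the paper's proof is short precisely because it can lean on those earlier results.
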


\section{Length of the first block}\label{s:firstblock}

Recall that the length of the first block corresponds to the degree of
the first vertex in the random preferential attachment graph
$\pag{n}$. When standard deals are used, 
 the first match occurs at the
position of the  red card with value 1.  For $\sigma\in\sta{n}$  Velleman and Warrington  denoted this position by $f(\sigma)$ and 
they defined a sequence  
\[
a(n,j):=\{\pi \in\sta{n}: f(\pi)=j\}.
\]
They noted  (\cite[Section~3]{VW}) that the numbers $a(n,j)$ satisfy the recurrence
\[a(n,j)=(2n-1-j)a(n-1,j)+(j-1)a(n-1,j-1)
\]
with the boundary conditions $a(1,2)=1$ and $a(n,j)=0$ if $j<2$ or
$j>n+1$. They used this relation to obtain the expression for the expected position
of the first match. 
Their argument quickly leads to the following statement.
\begin{proposition}\label{prop:A_recur} Let 
\[A_n(x)=\sum_{j=2}^{n+1}a(n,j)x^j,\] 
be the generating polynomial of the sequence $(a(n,j))$. Then, for $n\ge2$
\begin{equation}\label{rec_An}A_n(x)=(2n-1)A_{n-1}(x)+x(x-1)A'_{n-1}(x),\quad A_1(x)=x^2.\end{equation}
Furthermore, (see \cite[Theorem~5]{VW} for the first statement) as $n\to\infty$
\begin{equation}\label{expDn1}\bbE{D_{n,1}}=\frac{2^{2n}}{{2n\choose n}}\sim\sqrt{\pi n}\end{equation} 
and
\begin{equation}\label{varDn1}\var(D_{n,1})= (4-\pi)n+O(\sqrt n).\end{equation}
\end{proposition}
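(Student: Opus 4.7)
The plan is to prove the three claims in succession: the recurrence~\eqref{rec_An} by a standard generating function manipulation, the mean~\eqref{expDn1} by a single differentiation, and the variance~\eqref{varDn1} by a second differentiation plus a short Wallis-integral computation. First, multiply the Velleman--Warrington recursion $a(n,j) = (2n-1-j)\, a(n-1,j) + (j-1)\, a(n-1,j-1)$ by $x^j$ and sum over $j$; using $\sum_j j\, a(n-1,j)\, x^j = x A'_{n-1}(x)$ in the first sum and the index shift $j \mapsto j+1$ in the second, the identity~\eqref{rec_An} falls out. Evaluating~\eqref{rec_An} at $x=1$ gives $A_n(1) = (2n-1)\, A_{n-1}(1) = (2n-1)!! = |\sta{n}|$, and differentiating~\eqref{rec_An} once and then setting $x=1$ yields $A'_n(1) = 2n\, A'_{n-1}(1)$ with $A'_1(1)=2$, so $A'_n(1) = 2^n n!$ and
\[
\bbE{D_{n,1}} = \frac{A'_n(1)}{A_n(1)} = \frac{2^n n!}{(2n-1)!!} = \frac{4^n}{\binom{2n}{n}};
\]
Stirling's formula then produces the asymptotic $\sqrt{\pi n}$.

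For the variance I differentiate~\eqref{rec_An} twice and evaluate at $x=1$, obtaining the first-order linear recurrence $A''_n(1) = (2n+1)\, A''_{n-1}(1) + 2\, A'_{n-1}(1)$, which I solve explicitly by variation of parameters. After dividing by $A_n(1)=(2n-1)!!$ and simplifying, the second factorial moment becomes
\[
\bbE{D_{n,1}(D_{n,1}-1)} = (2n+1)\, S_n, \qquad S_n := \sum_{j=1}^{n} \frac{4^j}{j(2j+1)\binom{2j}{j}}.
\]
The critical step is to identify $S_\infty := \lim_n S_n = 2$. I would do this via the Wallis integral $\frac{4^j}{(2j+1)\binom{2j}{j}} = \frac{(2j)!!}{(2j+1)!!} = \int_0^{\pi/2}\sin^{2j+1}\theta\, d\theta$; exchanging sum and integral gives
\[
S_\infty = \int_0^{\pi/2}\sin\theta \sum_{j\ge 1} \frac{\sin^{2j}\theta}{j}\, d\theta = -2\int_0^{\pi/2}\sin\theta\, \ln\cos\theta\, d\theta = -2\int_0^1 \ln u\, du = 2,
\]
the penultimate equality coming from the substitution $u = \cos\theta$.

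To conclude I combine $\var(D_{n,1}) = \bbE{D_{n,1}(D_{n,1}-1)} + \bbE{D_{n,1}} - (\bbE{D_{n,1}})^2$ with the tail estimate $S_\infty - S_n = O(n^{-1/2})$, which is immediate from the Stirling asymptotic $\frac{4^j}{j(2j+1)\binom{2j}{j}} \sim \sqrt{\pi}/(2j^{3/2})$ and gives $(2n+1)(S_\infty - S_n) = 2\sqrt{\pi n} + O(1)$, together with $(\bbE{D_{n,1}})^2 = \pi n + O(1)$ (another Stirling expansion). These pieces combine to give $\var(D_{n,1}) = (4 - \pi)n + O(\sqrt n)$. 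The main obstacle in the argument is the closed-form identification $S_\infty = 2$; once that is in hand the remaining Stirling expansions and tail bounds are routine bookkeeping.
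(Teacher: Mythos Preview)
Your argument is correct and follows the same overall scheme as the paper: derive the recurrence for $A_n(x)$, then evaluate successive derivatives at $x=1$ to extract the factorial moments. The one genuine difference is in handling the second factorial moment. Both you and the paper arrive at the recurrence $g_n:=\bbE{(D_{n,1})_2}/(2n+1)=g_{n-1}+\dfrac{4^n}{n(2n+1)\binom{2n}{n}}$, but from there the paper observes that the increment telescopes exactly, giving the closed form
\[
g_n=2-\frac{2^{2n+2}}{(n+1)\binom{2n+2}{n+1}},
\]
from which $\bbE{(D_{n,1})_2}=4n+O(\sqrt n)$ follows immediately by a single Stirling expansion. You instead write $g_n=S_n$, identify the limit $S_\infty=2$ via the Wallis integral $\frac{4^j}{(2j+1)\binom{2j}{j}}=\int_0^{\pi/2}\sin^{2j+1}\theta\,d\theta$ and the logarithmic series, and then control the tail $S_\infty-S_n$ by Stirling. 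Your route is a little longer and trades an exact identity for an analytic evaluation, but it is self-contained and avoids having to spot the telescoping; the paper's route is slicker and yields an exact expression for $\bbE{(D_{n,1})_2}$ rather than just the asymptotic.
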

\proof Let $h(j)$ be any sequence and set
\[H_n:=\sum_{j=2}^{n+1}h(j)a(n,j).\]
Then, by 
\cite[Lemma~4]{VW} (where $h(j)=j$ is used   
but the same argument
works for any sequence $(h(j))$) we obtain
\[
H_n=(2n-1)H_{n-1}+\sum_{j=2}^{n}j(h(j+1)-h(j))a(n-1,j).
\]
If $h(j)=x^j$ then $H_n=A_n(x)$, $h(j+1)-h(j)=x^j(x-1)$ and we obtain
\eqref{rec_An}.

To prove \eqref{expDn1} and \eqref{varDn1} let $r\ge 1$. The Leibnitz formula and \eqref{rec_An} imply that 
\begin{eqnarray*}A^{(r)}_n(x)&=&(2n-1)A_{n-1}^{(r)}(x)+\Big(x(x-1)A'_{n-1}(x)\Big)^{(r)}\\&=&
(2n-1)A_{n-1}^{(r)}(x)+\sum_{j=0}^r{r\choose j}(x(x-1))^{(j)}A_{n-1}^{(r-j+1)}(x)
\\&=&(2n-1)A_{n-1}^{(r)}(x)+x(x-1)A_{n-1}^{(r+1)}(x)+r(2x-1)A_{n-1}^{(r)}(x)+r(r-1)A_{n-1}^{(r-1)}(x)
\\&=&(2n-1+r(2x-1))A_{n-1}^{(r)}(x)+r(r-1)A_{n-1}^{(r-1)}(x)+x(x-1)A_{n-1}^{(r+1)}(x).
\end{eqnarray*}
It follows that 
\[ 
A_n^{(r)}(1)=(2n-1+r)A_{n-1}^{(r)}(1)+r(r-1)A_{n-1}^{(r-1)}(1).
\]
When $r=0$ we obtain
\[A_n(1)=(2n-1)A_{n-1}(1)=\dots=(2n-1)!!\]
as was observed by Velleman and Warrington \cite[Lemma~3]{VW}.  
 This yields the following recurrence for the factorial moments
\begin{eqnarray*}\bbE{(D_{n,1})_r}&=&\frac{A_n^{(r)}(1)}{A_n(1)}=(2n-1+r)\frac{A_{n-1}^{(r)}(1)}{A_n(1)}+r(r-1)\frac{A_{n-1}^{(r-1)}(1)}{A_n(1)}\\&=&
\frac{2n-1+r}{2n-1}\frac{A_{n-1}^{(r)}(1)}{A_{n-1}(1)}+\frac{r(r-1)}{2n-1}\frac{A_{n-1}^{(r-1)}(1)}{A_{n-1}(1)}\\&
=&\frac{2n-1+r}{2n-1}\bbE{(D_{n-1,1})_r}+\frac{r(r-1)}{2n-1}\bbE{(D_{n-1,1})_{r-1}}
\end{eqnarray*}
with the initial condition specified by $D_{1,1}=2$.
Letting $r=1$ gives
\[
\bbE{D_{n,1}}=\frac{2n}{2n-1}\bbE{D_{n-1,1}} =
\frac{(2n)!!}{(2n-1)!!}=\frac{2^{2n}}{{2n\choose n}}\sim\sqrt{\pi n} 
\]
which proves \eqref{expDn1}.

When we set $r=2$ we get
\[\bbE{(D_{n,1})_2}
=\frac{2n+1}{2n-1}\bbE{(D_{n-1,1})_2}+\frac2{2n-1}\bbE{D_{n-1,1}}=
\frac{2n+1}{2n-1}\bbE{(D_{n-1,1})_2}+
\frac{2^{2n-1}}{(2n-1){2n-2\choose n-1}}.
\]
Letting $g_n:=\bbE{(D_{n,1})_{2}}/(2n+1)$ this yields
\[
g_n=g_{n-1}+\frac{2^{2n-1}}{(2n+1)(2n-1){2n-2\choose n-1}}
\]
with $g_1=2/3$.
This is solved by
\[
g_n=2-\frac{2^{2n+2}}{(n+1){2n+2\choose n+1}}		\sim2-\sqrt{\frac\pi{n+1}} .
\]
Hence
\[
\bbE{(D_{n,1})_2}=2(2n+1)-\frac{2n+1}{n+1}\frac{2^{2n+2}}{{2n+2\choose n+1}}=4n+O(\sqrt n)
\]
and thus
\[
\var(D_{n,1})=\bbE{(D_{n,1})_2}+\bbE{D_{n,1}}-(\bbE{D_{n,1}})^2=(4-\pi)n+O(\sqrt n),
\]
which proves \eqref{varDn1}.
\endproof

\begin{remark}
Presumably, this process could be continued to determine the
asymptotics of the factorial moments, and possibly identify the
limiting distribution of $D_{n,1}$.  For instance, for $r=3$, we can get
\[
\bbE{(D_{n,1})_3}=6\left(\frac{\sqrt\pi(n+2)n!}{\Gamma(n+1/2)}-4n-3\right)\sim6\sqrt\pi n^{3/2}.
\]
However, computations become progressively more complicated and hence we use a different approach to find an asymptotic distribution for $D_{n,1}$. 
\end{remark}

The following simple lemma will be used throughout the paper.
\begin{lemma}\label{lem:fallfact}
Let $n\to \infty$ and let $m=m(n)$ be a positive integer valued function such that  $m=o(n^{2/3})$. We have
\[
(n)_m = n^m \cdot e^{-{m\choose 2}/n} \left( 1+O(m^3/n^2)\right). 
\]
\end{lemma}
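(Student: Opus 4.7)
The plan is to factor out $n^m$ and then analyze the logarithm of the remaining product via a Taylor expansion, which is justified uniformly because $m/n = o(n^{-1/3}) \to 0$.

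First I would write
\[
(n)_m = \prod_{k=0}^{m-1}(n-k) = n^m \prod_{k=0}^{m-1}\left(1 - \frac{k}{n}\right),
\]
noting that $m \le n$ is automatic from $m = o(n^{2/3})$. Taking logarithms, I want to control $S := \sum_{k=0}^{m-1} \log(1 - k/n)$. For every $k \in \{0,1,\dots,m-1\}$ the argument satisfies $k/n \le (m-1)/n = o(n^{-1/3})$, so in particular it is bounded away from $1$ uniformly in $k$ for large $n$. The Taylor expansion
\[
\log(1-x) = -x - \frac{x^2}{2} + O(x^3) \qquad (x \to 0)
\]
therefore applies with a uniform constant in the $O$-term for all the values $x = k/n$ encountered.

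Next I would sum term by term. The linear contribution is exactly
\[
-\frac{1}{n}\sum_{k=0}^{m-1} k = -\frac{1}{n}\binom{m}{2},
\]
which produces the leading factor $e^{-\binom{m}{2}/n}$ claimed in the lemma. The quadratic contribution is $-\frac{1}{2n^2}\sum_{k=0}^{m-1} k^2 = O(m^3/n^2)$, and the cubic-and-higher remainder is bounded by $C\sum_{k=0}^{m-1} (k/n)^3 = O(m^4/n^3)$. Since $m \le n$, we have $m^4/n^3 \le m^3/n^2$, so the two error contributions combine to $O(m^3/n^2)$. Hence
\[
S = -\frac{\binom{m}{2}}{n} + R_n, \qquad R_n = O\!\left(\frac{m^3}{n^2}\right).
\]

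Finally I would exponentiate. The hypothesis $m = o(n^{2/3})$ gives $m^3/n^2 \to 0$, so $R_n \to 0$ and $e^{R_n} = 1 + R_n + O(R_n^2) = 1 + O(m^3/n^2)$. Multiplying by $n^m e^{-\binom{m}{2}/n}$ yields the stated identity. There is really no main obstacle here; the only point requiring care is that one must \emph{not} further expand the $e^{-\binom{m}{2}/n}$ factor, since under the given assumption $\binom{m}{2}/n$ can tend to infinity (it is only $o(n^{1/3})$), so keeping it inside the exponential is essential for the error term $O(m^3/n^2)$ to be the actual relative error.
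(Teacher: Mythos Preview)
Your proof is correct and follows essentially the same approach as the paper: factor out $n^m$, take logarithms, Taylor-expand $\ln(1-j/n)$, and sum term by term. The paper's own proof is just a one-line pointer to this computation, so your version is in fact a fuller write-up of the same argument.
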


\begin{proof}
The desired equation follows from the Taylor expansion of the logarithms on the right--hand side of
\begin{equation}\label{eq:taylor}
(n)_m = n^m \cdot \prod_{j=1}^{m-1} (1-j/n)=n^m\e{\sum_{j=1}^{m-1}\ln (1-j/n)}.				\qedhere
\end{equation}
\end{proof}

\begin{proof}[Proof of Theorem~\ref{thm:firstmatch}]
For the proof, we use the space of all deals $\T_n$. Let $r_1$ denote the first red card in the random permutation. If the length of the first block is $t$, then the position of $r_1$ is $t$, in which case $t-1$ blue cards precede $r_1$, one of them being the blue counterpart of $r_1$. 
 Now we want to find the number of games such that the first red card appears at position $t$. To this end, we first choose $r_1$, which can be done in $n$ ways. Then we choose and permute the blue cards preceding $r_1$,  which can be done in ${n-1\choose t-2}(t-1)!$ ways. Note that we only choose $t-2$ blue cards since one of the $t-1$ is already known. Then we permute the remaining cards that will appear after $r_1$, and this can be done in $(2n-t)!2^{n-t+1}$ ways. Lastly, to find the probability $\bbP{D_{n,1}=t}$, we need to divide the product by $(2n)!2^{-n}$, the size of $\T_n$.
 Hence, for $2\le t\le n+1$, 
\begin{align}\label{probD_1=t}
\bbP{D_{n,1}=t} &= n{n-1\choose t-2}(t-1)! \frac{(2n-t)!}{2^{n-t+1}} \frac{2^n}{(2n)!}	 \notag	 \\
&= \frac{t-1}{2n-t+1} \, \frac{2^{t-1}\,(n)_{t-1}}{(2n)_{t-1}}.
\end{align}
Let $t=2\alpha\sqrt{n}$ for some positive $\alpha$. Using Lemma~\ref{lem:fallfact}, we have
\[
\frac{(n)_{t-1}}{(2n)_{t-1}}= \e{-t^2/4n+O(t/n)+O(t^3/n^2)},
\]
and consequently,
\[
\bbP{\frac{D_{n,1}}{2\sqrt{n}}=\alpha}= \lp 1+O\lp \frac{1}{\sqrt n}\rp \rp\frac1{2\sqrt n}\, 2\alpha e^{-\alpha^2}.
\]
Thus, $D_{n,1}/2\sqrt n$\, converges in distribution to a random variable $X$ with density function $f(x)$, where $f(x)=0$ for $x<0$, and 
$f(x)=2xe^{-x^2}$ for $x\ge 0$. In particular, we have
\[
F_n(z):=\bbP{\frac{D_{n,1}}{2\sqrt{n}}\le z} \to 1-e^{-z^2}
\]
for any $z\ge 0$.
\end{proof}

With a little more work, we can bound $|F_n(\alpha)-F(\alpha)|$, where $F$ is the cumulative distribution function of $X$.  Define $e_t= (t/(2n))\exp(-t^2/4n)$. Note that the sequence $\{e_t\}_{t\ge 1}$ is log-concave. Now fix $\alpha>0$ and let $N_\alpha=\lfloor2\alpha\sqrt n \rfloor$. 
By~\eqref{probD_1=t}, we have
\begin{align*}
F_n(\alpha) &=  \sum_{t=2}^{N_\a} \pr{D_{n,1}=t}  =  \sum_{t=1}^{N_\a-1} \frac{t}{2n-t} \cdot  \frac{2^t(n)_t}{(2n)_t}  =  \sum_{t=1}^{N_\a-1} \left( \frac{t}{2n}+O(t^2/n^2) \right) \cdot  \frac{2^t(n)_t}{(2n)_t}.
\end{align*}
Using Lemma~\ref{lem:fallfact},
\[
\frac{2^t(n)_t}{(2n)_t} = \e{-\frac{t(t-1)}{4n}}\left( 1+O\left(\frac{t^3}{n^2}\right)\right). 
\]
Combining the previous two equations,
\[
F_n(\alpha) = \sum_{t=1}^{N_\a-1} e_t \left( 1+O\left(t/n+t^3/n^2\right) \right) = \sum_{t=1}^{N_\a-1} e_t +O\big(1/\sqrt n\big).
\]
In fact, by a more careful analysis of \eqref{eq:taylor}, we can write
\[
\Big| F_n(\alpha)-\sum_{t=1}^{N_{\alpha}-1}e_t \Big| \le \frac{C(1+\alpha+\alpha^3)}{\sqrt{n}}
\]
for an absolute constant $C$ uniformly for all $\alpha$.
 Considering $\sum e_t$ as a Riemann sum, by the log-concavity of $\{e_t\}$, it is easy to see that, for some constant $C_2$,
\[
\Big| \sum_{t=1}^{N_{\alpha}-1}e_t - F(\alpha)\Big| \le C_2e^*, 
\]
where $e^*$ is the maximum of $\{e_t\}$, which is at most $1/\sqrt{2en}$. Using the last two equations, for all $\alpha>0$,
\[
|F_n(\alpha)-F(\alpha)| \le C_3(1+\alpha+\alpha^3)n^{-1/2}
\]
for an absolute constant $C_3$. On the other hand, for $\a \ge \sqrt{\log n}$,
\[
1-F(\a) = e^{-\a^2} =O(1/n)
\]
and
\[
|F_n(\a)-F(\a)| \le |1-F(\a)| +|F_n(\a)-1| \le O(1/n) +1-F_n\Big(\sqrt{\log n}\Big) = O\Big((\log n)^{3/2}n^{-1/2}\Big).
\]

\begin{proof}[Proof of Theorem~\ref{thm:firstkmatch}]
Let $t_1,\dots,t_k$ be positive integers and let $t=t_1+\cdots+t_k$. Similarly to~\eqref{probD_1=t}, we write
\begin{align*}
\bbP{D_{n,1}=t_1,\dots,D_{n,k}=t_k}&= {n \choose k}k! (t_1-1)(t_1+t_2-3)\cdots(t_1+\cdots+t_k-2k+1)\\
&\qquad \times {n-k\choose t-2k}(t-2k)! \frac{(2n-t)!}{2^{n-t+k}}\cdot \frac{2^n}{(2n)!}.
\end{align*}
Here $(t_1-1)$ is the number of possible positions for the blue partner of the first red card, $(t_1+t_2-3)$ is the number of positions for the blue partner of the second red card and so on. Simplifying this, we get
\[
\bbP{D_{n,1}=t_1,\dots,D_{n,k}=t_k}= \frac{2^{t-k}\,(n)_{t-k}}{(2n)_{m-k}}\cdot \frac{(t_1-1)\cdots(t_1+\cdots t_k-2k+1)}{(2n-m+k)_k}.
\]
Letting $t_i= 2\alpha_i\sqrt{n}$ for positive $\alpha_i$, the right--hand side becomes
\[
\lp 1+O\lp \frac{1}{\sqrt n}\rp \rp \lp e^{-(\alpha_1+\cdots+\alpha_k)^2}\rp \left(\frac{\alpha_1}{\sqrt n}\right)\,\cdots\, \left(\frac{\alpha_1+\cdots+\alpha_k}{\sqrt n}\right),
\]
from which the theorem follows.
\end{proof}

\section{Blocks of a given size}\label{sec:expectedblocksizes}

In this section we will estimate the expected value and the variance of $B_{n,i}$. We need these estimates in the next sections. 
We start with the following easy lemmas.

\begin{lemma}\label{lem:sums} For $N$, $i$,and $j$ we have
\begin{equation}\label{single_sum}
\sum_k{N-k\choose i}{k\choose j}={N+1\choose i+j+1}
\end{equation}
\begin{equation}\label{double_sum}
\sum_k{k\choose l}{N-k\choose i}{N-k-i\choose j} ={i+j\choose i}{N+1\choose i+j+l+1}.
\end{equation}
\end{lemma}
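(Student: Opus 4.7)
Both identities admit short combinatorial proofs, and I would handle them in sequence.

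For \eqref{single_sum} I would double-count the $(i+j+1)$-element subsets of $\{0,1,\dots,N\}$, whose total number is ${N+1\choose i+j+1}$. Classifying such a subset by the value $k$ of its $(j+1)$-st smallest element, the $j$ smaller entries must be chosen from $\{0,\dots,k-1\}$ in ${k\choose j}$ ways, while the $i$ larger entries must be chosen from $\{k+1,\dots,N\}$ in ${N-k\choose i}$ ways. Summing over $k$ yields \eqref{single_sum}.

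For \eqref{double_sum} I would first rewrite the product ${N-k\choose i}{N-k-i\choose j}={i+j\choose i}{N-k\choose i+j}$ using the standard subset-of-subset identity ${a\choose b}{a-b\choose c}={a\choose b+c}{b+c\choose b}$ with $a=N-k$, $b=i$, $c=j$. Substituting this into the left-hand side of \eqref{double_sum}, pulling the constant factor ${i+j\choose i}$ outside the sum, and then applying \eqref{single_sum} with $i+j$ in place of $i$ and $l$ in place of $j$ gives exactly the right-hand side.

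Since both identities are classical and the argument reduces to a single combinatorial interpretation together with one algebraic rewrite, I do not expect any genuine obstacle; the only care needed is to keep the ranges of summation honest (the terms with $k<j$ or $k>N-i$, respectively $k<l$ or $k>N-i-j$, vanish automatically by convention on the binomial coefficients, so writing $\sum_k$ without explicit bounds is harmless).
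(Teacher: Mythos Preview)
Your proposal is correct and matches the paper's approach almost exactly: for \eqref{double_sum} you perform the same rewrite $\binom{N-k}{i}\binom{N-k-i}{j}=\binom{i+j}{i}\binom{N-k}{i+j}$ and then invoke \eqref{single_sum}, just as the paper does. The only difference is that for \eqref{single_sum} the paper simply cites the Vandermonde-type identity from Graham--Knuth--Patashnik, whereas you supply the standard combinatorial proof; this makes your version self-contained but is not a different route.
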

\begin{proof}
The first assertion is \cite[formula (5.26)]{GKP} with $q=0$. 
For the second formula note that
\[
{N-k\choose i} {N-k-i\choose j}
=\frac{(N-k)!}{i!(N-k-i)!}  \frac{(N-k-i)!}{j!(N-k-i-j)!}
={i+j\choose i} {N-k\choose i+j}.
\]
Hence, by \eqref{single_sum}, the left--hand side of \eqref{double_sum} is
\[{i+j\choose i}\sum_k{k\choose l}{N-k\choose i+j}={i+j\choose
  i}{N+1\choose i+j+l+1}. \qedhere
\]
\end{proof}

The next lemma follows from simple algebra and the proof is omitted.
\begin{lemma}\label{lem:fij}
For $
f_{xy}:=6{x+y-1\choose y}
+3{x+y\choose y+1}
+ {x+y+1\choose y+2}
$, we have 
\[
f_{ij}+f_{ji} = \frac{(i+j+4)!}{(i+2)!(j+2)!}. 
\] 
\end{lemma}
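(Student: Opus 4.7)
The plan is a direct symbolic manipulation via Pascal's rule, with no recourse to generating functions or combinatorial interpretations. First, I would set $s := i+j$ and use the elementary symmetry $\binom{n}{k}=\binom{n}{n-k}$ to rewrite the three binomials appearing in $f_{ji}$, namely $\binom{s-1}{i}$, $\binom{s}{i+1}$, $\binom{s+1}{i+2}$, as $\binom{s-1}{j-1}$, $\binom{s}{j-1}$, $\binom{s+1}{j-1}$, respectively. This puts $f_{ij}+f_{ji}$ into the form
\[
6\!\left[\binom{s-1}{j}+\binom{s-1}{j-1}\right] + 3\!\left[\binom{s}{j+1}+\binom{s}{j-1}\right] + \left[\binom{s+1}{j+2}+\binom{s+1}{j-1}\right],
\]
in which every entry is indexed near $j$. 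A single application of Pascal's rule collapses the first bracket into $6\binom{s}{j}$.

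Next, I would expand the target $\binom{s+4}{j+2}=\frac{(i+j+4)!}{(i+2)!(j+2)!}$ by iterating Pascal's rule four times to lower the upper index from $s+4$ back down to $s+1$. This produces the identity
\[
\binom{s+4}{j+2} \;=\; \binom{s+1}{j+2} + 3\binom{s+1}{j+1} + 3\binom{s+1}{j} + \binom{s+1}{j-1},
\]
with the multiplicities $1,3,3,1$ arising naturally from the iterated expansion. Cancelling the outermost terms $\binom{s+1}{j+2}$ and $\binom{s+1}{j-1}$ against matching pieces on the simplified left reduces the whole claim to
\[
2\binom{s}{j}+\binom{s}{j+1}+\binom{s}{j-1} \;=\; \binom{s+1}{j+1}+\binom{s+1}{j},
\]
which is immediate on applying Pascal's rule once to each term on the right.

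The only step that requires real care is the four-fold expansion of $\binom{s+4}{j+2}$, since the coefficients $1,3,3,1$ must be tracked correctly; this is the main (and only) obstacle, and it is purely bookkeeping. A fully alternative route, worth mentioning for contrast, would be to clear denominators by multiplying both sides by $(i+2)!(j+2)!$ and verify the resulting polynomial identity in $i$ and $j$ directly; this avoids binomial identities entirely but produces a longer and less transparent calculation, so I would prefer the Pascal-based argument sketched above.
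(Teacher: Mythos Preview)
Your argument is correct: the symmetry $\binom{s-1}{i}=\binom{s-1}{j-1}$, $\binom{s}{i+1}=\binom{s}{j-1}$, $\binom{s+1}{i+2}=\binom{s+1}{j-1}$ is exactly right, the collapse of the first bracket to $6\binom{s}{j}$ via Pascal is immediate, and the four-fold expansion $\binom{s+4}{j+2}=\binom{s+1}{j+2}+3\binom{s+1}{j+1}+3\binom{s+1}{j}+\binom{s+1}{j-1}$ checks out (these are just the coefficients $\binom{3}{0},\binom{3}{1},\binom{3}{2},\binom{3}{3}$ from iterating Pascal three times). After cancelling the two outer terms the residual identity is verified exactly as you say.

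The paper itself omits the proof entirely, stating only that it ``follows from simple algebra.'' Your Pascal-based derivation is thus a genuine fleshing-out rather than an alternative route; it is clean, short, and arguably the natural way to see why the $6,3,1$ weights in the definition of $f_{xy}$ are precisely what is needed to produce $\binom{s+4}{j+2}$ upon symmetrisation. The clearing-denominators alternative you mention would also work but, as you note, is less illuminating.
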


\subsection{Expected number of blocks of a given size}

It has been shown by
Velleman and Warrington that the expected value of  the number of
blocks of size $i$ is  asymptotic to $4n/(i+2)_3$ as the number of
pairs of cards $n$ goes to infinity. We will need a more
quantitative version of their result.

\begin{proposition}\label{lem:E[B_i]<>} 
As $n\to\infty$ and for $i=o(n^{1/2})$
\begin{equation}\label{E[B_i]>}
\mean{\bni}
=\mean{\xi_i}+\frac{4n}{(i+2)_3}
\left(1+O\left(\frac{i^2}n\right)\right),
\end{equation}
where $\xi_i$ is the indicator of the event that the first block has size $i$. 
(Note, in particular, that $\xi_1=0$.) The bound holds uniformly over $i^2/ n$.
Furthermore, for every $n\ge1$ and $1\le i\le n +1$, 
\begin{equation} \label{E[B_i]<}
\mean{\bni}
\le
\mean{\xi_i}+\frac{4e^{1/2}n}{(i+2)_3}.
\end{equation}
\end{proposition}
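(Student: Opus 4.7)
The plan is to represent a uniform standard deal by a uniform random chord diagram on $[2n]$: position $p$ is ``red'' (a right endpoint) precisely when its chord partner lies strictly to its left. In this language, a non-first block of size $i$ ending at position $t$ (for $t \ge i + 1$) is the event that positions $t - i$ and $t$ are both right endpoints while the interior positions $t - i + 1, \ldots, t - 1$ are all left endpoints. Writing $P_s$ for this probability at $t = s + i$, I would express
\[
\mean{\bni} - \mean{\xi_i} = \sum_{s=1}^{2n-i} P_s
\]
and reduce the proposition to evaluating this sum in closed form.

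To count chord diagrams realizing the pattern at a fixed $s$, I would condition on the partner of the red position $s + i$, splitting into Case A (partner in $\{1, \ldots, s-1\}$) and Case B (partner is an interior blue position $s + j$ for some $1 \le j \le i - 1$); Case B is vacuous when $i = 1$. A direct enumeration gives Case A count $(s-1)(s-2) \cdot \frac{(2n-s-i)!}{(2n-s-2i+1)!} \cdot (2n-2i-3)!!$ and Case B count $(s-1)(i-1) \cdot \frac{(2n-s-i)!}{(2n-s-2i+2)!} \cdot (2n-2i-1)!!$, each normalized by $(2n-1)!!$. Rewriting both as products of binomial coefficients and summing over $s$ via \eqref{single_sum}, then simplifying the double factorials via $(2m-1)!! = (2m)!/(2^m m!)$, yields the exact formula (for $i \ge 2$)
\[
\mean{\bni} - \mean{\xi_i} = \frac{2^{i}(n)_i}{(2n)_i} \cdot \frac{4n + i^2 - i + 2}{(i+2)_3},
\]
with the $i = 1$ case following from Case A alone as $\mean{B_{n,1}} = 2(n-1)/3$.

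The asymptotic \eqref{E[B_i]>} then follows from $\frac{2^{i}(n)_i}{(2n)_i} = \prod_{j=0}^{i-1}\bigl(1 - j/(2n-j)\bigr) = 1 + O(i^2/n)$ (via Lemma~\ref{lem:fallfact}) combined with $(4n + i^2 - i + 2)/(4n) = 1 + O(i^2/n)$, both uniform for $i = o(n^{1/2})$. For the bound \eqref{E[B_i]<}, I would apply the elementary inequalities $1 - x \le e^{-x}$ and $1 + x \le e^x$ to obtain
\[
\frac{2^{i}(n)_i}{(2n)_i} \le e^{-i(i-1)/(4n)} \quad\text{and}\quad 4n + i^2 - i + 2 \le 4n\, e^{(i^2 - i + 2)/(4n)};
\]
their product is $4n\, e^{2/(4n)} \le 4n\, e^{1/2}$ for every $n \ge 1$, and dividing by $(i+2)_3$ yields the claim.

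The main technical obstacle will be to ensure that Case B is not overlooked: the partner of the red card at position $s + i$ may well be an interior blue position (whenever $i \ge 2$), and this contribution is of the same order as Case A. Once both cases are counted and combined, the Vandermonde-type manipulations are routine, and the slack constant $e^{1/2}$ in the upper bound emerges naturally from the exact cancellation $(i^2 - i + 2) - i(i - 1) = 2$ in the combined exponent.
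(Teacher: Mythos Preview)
Your proposal is correct and follows essentially the same route as the paper: the paper's split into $E_1$ (both $1_B,2_B$ precede $1_R$) and $E_2$ ($2_B$ lies between $1_R$ and $2_R$) is exactly your Case~A/Case~B split, the Vandermonde summation \eqref{single_sum} is used in the same way, and both arrive at the identical closed form $\displaystyle \mean{\bni}-\mean{\xi_i}=\frac{2^{i}(n)_i}{(2n)_i}\cdot\frac{4n+i^2-i+2}{(i+2)_3}$, from which \eqref{E[B_i]>} and \eqref{E[B_i]<} follow by the same inequalities. The only cosmetic difference is that you work in the chord-diagram model $\sta{n}$ and sum over the position $s$ of the block's starting red card, whereas the paper works in $\T_n$ and sums over ordered label pairs $(a,b)$; these are equivalent parametrizations of the same count.
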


\proof
We use the set of permutations $\T_n$ throughout the proof. 
For $x\in [n]$, let $x_B$ and $x_R$ denote the blue and the red cards labeled with $x$, respectively. Recall that the block sizes are determined by the positions of the red cards.
For $a\not = b$, let $\iab$ denote the indicator random variable which
takes the value 1 if the following hold in the random permutation of
cards: (i) $ a_R$ comes before $b_R$, (ii) there is no red card
between $a_R$ and $b_R$, and (iii) there are exactly $i-1$ blue cards
between $a_R$ and $b_R$. If all three of these events hold, then the
contribution of the pair $(a_R,b_R)$ to $B_{n,i}$ is 1. 
Thus, we have
\begin{equation}\label{b_i}
\bni= \xi_i+\sum_{a\not =b}\iab.
\end{equation}
and
\begin{equation}\label{mean Bni=}
\mean{\bni}= \mean{\xi_i}+\sum_{a\not= b} \mean{\iab} = \mean{\xi_i}+ n(n-1)\mean{\ione},
\end{equation}
where the last identity follows from symmetry. 
For computing $\mean{\ione}$, we can first choose the positions of $1_R$ and $2_R$, then choose cards (all blue) to put between $1_R$ and $2_R$, and finally permute everything and divide by the total number of configurations. We write
\[
\mean{\ione}=E_1+E_2,
\]
where $E_1$ is the contribution of those configurations such that both $1_B$ and $2_B$ appear before $1_R$, and $E_2$ is the rest. Of course, $E_2$ is positive if and only if $2\le i\le n$.
 Now, as explained below,
\begin{align*}
E_1&=\sum_{k\ge 2} k(k-1) {n-2\choose i-1} (i-1)! {2n-k-i-1\choose i-1}(i-1)!\frac{(2n-2i-2)!/2^{n-i-1}}{(2n)!/2^n}.
\end{align*}
Explanation:  In the sum, $k+1$ represents the position of $1_R$. This implies that the position of $2_R$ is $k+i+1$. We choose the positions of $1_B$ and $2_B$ in $k(k-1)$ ways. We choose $i-1$ cards out of $n-2$ blue cards to put in between $1_R$ and $2_R$, and permute these in $(i-1)!$ ways. Next, we  choose $(i-1)$ positions from $\{k+i+2,\dots,2n\}$ for the red partners of the blue cards sandwiched between $1_R$ and $2_R$, and permute these red partners in $(i-1)!$ ways. The rest of the numerator is for permuting all the remaining cards.

After  simple cancellations, we get
\[
E_1= \frac{2^{i+2}(i-1)!(n-2)_{(i-1)}}{(2n)_{(2i+2)}}\sum_{k\ge 2}{k\choose 2}{2n-k-i-1\choose i-1}.
\]
Using~\eqref{single_sum} for the right--hand side above and simplifying the resulting expression, we get
\begin{equation}\label{E[B_i], E_1}
E_1=\frac{2^{i+2}(i-1)!(n-2)_{(i-1)}}{(2n)_{(2i+2)}} {2n-i\choose i+2}= \frac{2^{i+2}}{(i+2)_3}
 \cdot \frac{(n-2)_{(i-1)}}{(2n)_i}.
\end{equation}
Similarly, for $i\ge 2$ we have
\begin{align}\label{E[B_i], E_2}
E_2 &= \sum_{k\ge 1} k {n-2\choose i-2}(i-1)!{2n-k-i-1\choose i-2}(i-2)!\frac{(2n-2i)!/2^{n-i}}{(2n)!/2^n} \notag	\\
&=\frac{2^{i}(i-1)!(n-2)_{(i-2)}}{(2n)_{(2i)}}\sum_{k\ge 2} {k\choose 1} {2n-k-i-1\choose i-2}	\notag	\\
&=\frac{2^{i}(i-1)!(n-2)_{(i-2)}}{(2n)_{(2i)}} {2n-i \choose i}
=\frac{2^{i}}{i}\frac{(n-2)_{(i-2)}}{(2n)_i}.
\end{align}

Combining~\eqref{E[B_i], E_1} and~\eqref{E[B_i], E_2}, we get
\begin{equation}\label{E_1+E_2}
n(n-1)(E_1+E_2)= \frac{2^{i+2}\,
 (n)_{i+1}}{(i+2)_3\,(2n)_i}+\frac{2^i\, (n)_i}{i(2n)_i}
 =\frac{4n}{(i+2)_3}\frac{2^{i}\,
 (n)_{i}}{(2n)_i}
 \left(1+\frac{i^2-i+2}{4n}\right).
\end{equation}
It follows from Lemma~\ref{lem:fallfact} that for $i=o(\sqrt n)$
\[\frac{2^{i}\,
 (n)_{i}}{(2n)_i}=
 \left(1+O\left(\frac{i^2}{n}\right)\right)\]
 uniformly over $i^2/n$. This, combined with~\eqref{mean Bni=} and~\eqref{E_1+E_2},  implies \eqref{E[B_i]>}.

Furthermore, for $1\le i\le n+1$
\[\frac{2^i(n)_i}{(2n)_i}=\prod_{j=1}^{i-1}\frac{1-\frac jn}{1-\frac j{2n}}=\prod_{j=1}^{i-1}\left(1-\frac j{2n-j}\right)\le\prod_{j=1}^{i-1}\left(1-\frac j{2n}\right)\le\exp\left\{-\frac{i(i-1)}{4n}\right\}.
\]
Hence, applying $1+x\le e^x$ to the last factor in \eqref{E_1+E_2}
we obtain
\[
\frac{2^{i}\,
 (n)_{i}}{(2n)_i}
 \left(1+\frac{i^2-i+2}{4n}\right)\le\exp\left\{-\frac{i(i-1)}{4n}+\frac{i^2-i+2}{4n}\right\}\le e^{1/2},
\]
which proves \eqref{E[B_i]<}. 
\endproof

\subsection{Bound on the variance of the number of blocks of a given size}

In this section we estimate $\var(\bni)$. The following is sufficient
for our purposes.
\begin{lemma}\label{lem:varB_i<} There exists an absolute constant $M$ such that for
  $i=o(n^{1/3})$
\begin{equation}\label{varB_i<}
\var(B_{n,i}) \le M\,\frac{n}{i^3}.
\end{equation}
\end{lemma}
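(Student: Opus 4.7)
The proof is a second-moment calculation using the indicator decomposition $B_{n,i} = \xi_i + \sum_{a\ne b} \iab$ from the proof of Proposition~\ref{lem:E[B_i]<>}. Since $\var(\xi_i)\le 1$, we have $\var(B_{n,i})\le 2 + 2\var(S)$ with $S := \sum_{a\ne b} \iab$, and
\[
\var(S) = \mean{S} + \Phi - \mean{S}^2,\qquad \Phi := \sum_{(a,b)\ne(c,d)} \mean{\iab\, \icd}.
\]
By \eqref{E[B_i]<}, $\mean{S}\le\mean{B_{n,i}}=O(n/i^3)$, so it suffices to show $\Phi - \mean{S}^2 = O(n/i^3)$.

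I split $\Phi$ according to the size of the overlap $|\{a,b\}\cap\{c,d\}|$. Because $\iab=1$ forces $a_R$ to be immediately followed by $b_R$, two distinct ordered pairs can share at most one label, and only in the form $b=c$ or $a=d$. Thus $\Phi = \Phi_1 + \Phi_2$, where $\Phi_1$ (three distinct labels) corresponds to two \emph{consecutive} length--$i$ blocks and $\Phi_2$ (four distinct labels) to two length--$i$ blocks, possibly consecutive or not. By symmetry of the indicators,
\[
\Phi_1 = 2n(n-1)(n-2)\mean{I_{1,2}^i\, I_{2,3}^i},\qquad \Phi_2 = 2n(n-1)(n-2)(n-3)\mean{I_{1,2}^i\, I_{3,4}^i}.
\]

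To estimate $\Phi_1$, I would compute $\mean{I_{1,2}^i\, I_{2,3}^i}$ by enumerating configurations in which $1_R, 2_R, 3_R$ sit at positions $p, p+i, p+2i$, exactly as $E_1, E_2$ are handled in the proof of Proposition~\ref{lem:E[B_i]<>}. The bounded number of subcases---determined by whether each of $1_B, 2_B, 3_B$ lies inside or outside the two blocks---reduces via Lemma~\ref{lem:sums} to binomial coefficients, and Lemma~\ref{lem:fallfact} then yields $\mean{I_{1,2}^i\, I_{2,3}^i} = O\bigl(1/(n^2 i^6)\bigr)$; hence $\Phi_1 = O(n/i^6)$, well within the target.

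The main technical step is $\Phi_2$, handled by the same scheme but now with $1_R, 2_R$ at positions $p, p+i$ and $3_R, 4_R$ at positions $q, q+i$ (for $q > p+i$, doubled for the reversed order). The subcases are indexed by the placements of $1_B, 2_B, 3_B, 4_B$ relative to the two blocks and by whether the two blocks are consecutive (i.e., $q = p+i+1$) or separated by additional reds. Lemma~\ref{lem:sums} collapses the position sums into a closed form of the type \eqref{E_1+E_2}, and Lemma~\ref{lem:fallfact}, expanded to order $O(i^3/n^2)$, shows that the leading term of $\Phi_2$ equals $\bigl(n(n-1)\mean{I_{1,2}^i}\bigr)^2 = \mean{S}^2$. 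The residuals---of order $O(n/i^6)$ from the consecutive subcase and $\mean{S}^2 \cdot O(i^2/n) = O(n/i^4)$ from the Taylor remainders---sum to $O(n/i^3)$. The principal obstacle is precisely this bookkeeping: about a dozen subcases must be enumerated and their next-to-leading corrections tracked carefully to confirm that the cancellation with $\mean{S}^2$ is as tight as claimed.
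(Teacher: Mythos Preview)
Your plan is essentially the paper's proof: the same indicator decomposition, the same split by the number of distinct labels (your $\Phi_1,\Phi_2$ are the paper's $2\Sigma_2,\Sigma_1$), and the same machinery of collapsing the position sums via Lemma~\ref{lem:sums} and then applying Lemma~\ref{lem:fallfact}. One inaccuracy: the three--label term satisfies $\mean{I_{1,2}^i I_{2,3}^i}=O(n^{-2}i^{-4})$, hence $\Phi_1=O(n/i^4)$, not $O(n/i^6)$ as you state---the two adjacent blocks are positively correlated through the shared position of $2_R$, so the naive product heuristic overstates the decay in $i$; this does not affect the final $O(n/i^3)$ bound.
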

\proof By \eqref{b_i},
\begin{align}\label{def E[Bi2]}
\mean{\bni^2}&= \mean{\Big(\xi_i+ \sum_{(a,b)} \iab \Big)^2} \notag  \\
&= \mean{\xi_i}+\mean{\sum_{((a,b),(c,d))} \iab\cdot  \icd }+2\mean{\sum_{(a,b,c)} \iab\cdot  \ibc }+ 2\mean{ \xi_i \cdot \sum_{(a,b)} \iab}	\notag	\\
&=\mean{\xi_i}+ \Sigma_1+2\Sigma_2+2\Sigma_3, 
\end{align}
where different letters in the sums  denote distinct numbers from
$[n]$ and $\Sigma_k$ denotes the $k$-th sum before the last equality
sign. We will see that the main contribution to $\mean{B_{n,i}^2}$ comes from $\Sigma_1$.

By Proposition~\ref{lem:E[B_i]<>}, 		
 there are some absolute positive  constants $K_1$ and $K_2$ such that $\mean{\bni}$  lies in the interval $[K_1n/i^{3}, K_2n/i^3]$ for all $i=o(n^{1/3})$ as $n\to \infty$. Also, since there are only $n$ blocks, we have the trivial upper bound $\bni \le n$. Hence
\begin{equation}\label{Sigma3}
2\Sigma_3=2\mean{\xi_i(\bni-\xi_i)}\le 2n\mean{\xi_i}\le 2i,
\end{equation}
since, using~\eqref{probD_1=t}, we have
\begin{align*}
\mean{\xi_i}=\pr{D_{n,1}=i} = \frac{i-1}{2n-i+1}\cdot \frac{2^{i-1}(n)_{i-1}}{(2n)_{i-1}} \le \frac in
\end{align*}
for $i\le n+1$.
Next we estimate $\Sigma_1$. 

\medskip

\noindent \textbf{Computation of $\Sigma_1$.} Let $A= A(n,i)$ be the following event:
\vspace{-\topsep}
\begin{enumerate}\itemsep0pt \parskip0pt \parsep0pt \partopsep0pt \topsep0pt
\item[(a)]  $\{1_R,2_R,3_R,4_R\}$  appear in the order $1_R,2_R,3_R,4_R$ in the permutation,
\item[(b)] $1_R$ and $2_R$ are separated by $i-1$ blue cards,
\item[(c)] $3_R$ and $4_R$ are separated by $i-1$ blue cards.
\end{enumerate}
Let $p_i$ be the probability of $A$. Note that, for each permutation in $A$, the contribution of the 4-tuple $(1_R,2_R,3_R,4_R)$ to $\Sigma_1$  is 2, and we have
\[
\Sigma_1= 2{n \choose 4}4!p_i= 2(n)_4 \,p_i.
\]
We now estimate $p_i$ for $i=O(n^{1/3})$.
Before we start, we introduce some notation. The positions of the red cards $1_R$, $2_R$, $3_R$, and $4_R$ determine five disjoint intervals in a permutation. We denote the $j$-th interval by $I_j$ for $1\le j\le 5$. Depending on which interval(s) the blue cards $\{1_B,2_B,3_B,4_B\}$ lie, we have different cases. Let $t_u$ be the number of blue cards from $\{1_B,2_B,3_B,4_B\}$ in the interval $I_u$ for $1\le u \le 4$. Note that $I_5$ cannot contain any of these four blue cards. Since $x_B$ must appear before $x_R$ for any $x\in [n]$, we have the following constraints for the nonnegative integers $t_1$, $t_2$, $t_3$, and $t_4$.
\begin{equation}\label{t constraints}
\sum_{u=1}^{v}t_u \ge v, \ \text{for } v \in \{1,2,3\}; \quad t_1+t_2+t_3+t_4=4.
\end{equation}
Both intervals $I_2$ and $I_4$ have lengths $i-1$, that is, there are $i-1$ positions in each interval. We denote by $k$ and $\ell$ the lengths of $I_1$ and $I_3$, respectively. Thus, the length of $I_5$ is $(2n-4)-k-\ell-2(i-1)= (2n-2i-2-k-\ell)$. For a given $\bs t=(t_1,t_2,t_3,t_4)$ meeting the conditions in~\eqref{t constraints}, let $K(\bs t)$ be the number of allocations of  $\{1_B,\dots,4_B\}$ into intervals $I_1$, $I_2$, $I_3$,  and $I_4$, respecting $\bs t$. By considering the number of blue cards in intervals and the red partners of the blue cards in $I_2$ and $I_4$, we get
\begin{align*}
p_i&=\sum_{\bs t} K(\bs t) \sum_{k,\ell} {k \choose t_1}t_1!{i-1 \choose t_2}t_2!{\ell \choose t_3}t_3!{i-1 \choose t_4}t_4! {n-4 \choose 2i-2-t_2-t_4}(2i-2-t_2-t_4)!\\
&\qquad \times{2n-2i-k-\ell-2 \choose i-1-t_4}(i-1-t_4)!{2n-3i-k-1-t_3+t_4 \choose i-1-t_2}(i-1-t_2)!\\
&\qquad \times\frac{(2n-4i-4+2t_2+2t_4)!}{2^{n-2i-2+t_2+t_4}}\cdot \frac{2^n}{(2n)!}.
\end{align*}
After cancellations and taking the factors that do not depend on $k$ or $\ell$ out of the inner sum, we get
\begin{align*}
p_i&=\sum_{\bs t} K(\bs t) (i-1)!^2\,t_1!t_3! (n-4)_{2i-2-t_2-t_4}\cdot  \frac{2^{2i+2-t_2-t_4}}{(2n)_{4i+4-2t_2-2t_4}}\\
&\quad \times  \sum_{k} {k \choose t_1}{2n-3i-k-1-t_3+t_4 \choose i-1-t_2}\sum_{\ell}{\ell \choose t_3} {2n-2i-k-\ell-2 \choose i-1-t_4}.
\end{align*}
Using Lemma \ref{lem:sums},
\[
\sum_{\ell}{\ell \choose t_3} {2n-2i-k-\ell-2 \choose i-1-t_4}= {2n-2i-k-1 \choose i+t_3-t_4},
\]
so we need the sum
\begin{equation}\label{sum over k}
 \sum_{k} {k \choose t_1}{2n-3i-k-1-t_3+t_4 \choose i-1-t_2}{2n-2i-k-1 \choose i+t_3-t_4}.
\end{equation}
For the product of the second and the third terms in the sum, we write
\[
{2n-3i-k-1-t_3+t_4 \choose i-1-t_2}{2n-2i-k-1 \choose i+t_3-t_4}
= {2i-1-t_2+t_3-t_4 \choose i-1-t_2} {2n-2i-k-1 \choose 2i-1-t_2+t_3-t_4}.
\]
Using this identity, the sum in \eqref{sum over k} can be written as
\[
{2i-1-t_2+t_3-t_4 \choose i-1-t_2}\sum_{k} {k \choose t_1}{2n-2i-k-1 \choose 2i-1-t_2+t_3-t_4}.
\]
Using Lemma \ref{lem:sums} once again, this becomes
\[
{2i-1-t_2+t_3-t_4 \choose i-1-t_2}{2n-2i \choose 2i+t_1-t_2+t_3-t_4}.
\]
Combining the results in the previous lines and using the identity $4-2t_2-2t_4= t_1-t_2+t_3-t_4$,
\begin{align}\label{p_ij times (n)_4}
(n)_4\cdot p_i&=\sum_{\bs t} K(\bs t) (i-1)!^2\,t_1!t_3!\, (n)_{2i+2-t_2-t_4}	  \notag \\
&\qquad \times  \frac{2^{2i+2-t_2-t_4}} {(2n)_{4i+4-2t_2-2t_4}} {2i-1-t_2+t_3-t_4 \choose i-1-t_2}{2n-2i \choose 2i+t_1-t_2+t_3-t_4} \notag \\
&= \sum_{\bs t} K(\bs t)t_1!t_3! \, \frac{(i-1)!^2}{(i-1-t_2)!(i+t_3-t_4)!}
\, \frac{(2i-1-t_2+t_3-t_4)!}{(2i+t_1-t_2+t_3-t_4)!}
\, \frac{2^{2i+2-t_2-t_4}\cdot (n)_{2i+2-t_2-t_4}}{(2n)_{2i}}.
\end{align}
Defining 
$a:=2i+2-t_2-t_4 $ 
and using Lemma~\ref{lem:fallfact} for the falling factorials in \eqref{p_ij times (n)_4}, we obtain
\begin{align*}
(n)_4\, p_i&=\sum_{\bs t}K(\bs t)t_1!t_3! \ \frac{(i-1)!^2}{(i-1-t_2)!(i+t_3-t_4)!} 
\ \frac{(2i-1-t_2+t_3-t_4)!}{(2i+t_1-t_2+t_3-t_4)!}		\notag	\\
&\quad \times (2n)^{t_1+t_3-2}\ \e{\frac{-2{a \choose 2}+{2i\choose 2}}{2n}} \left[1+ O\left({\frac{1}{n}}\right) \right],
\end{align*}
where
\[
\frac{-2{a \choose 2}+{2i\choose 2}}{2n} = -\frac{i^2}{n}+O\left( \frac{i}{n}\right).
\]
Hence, for $i=o(n^{1/3})$, we have
\begin{equation} \label{n4pi}
(n)_4\, p_i = \left( 1+O(i^2/n)\right)\sum_{\bs t} K(\bs t)t_1!t_3!\,  (2n)^{t_1+t_3-2}\,F(\bs t),
\end{equation}
where
\begin{equation}\label{def,F}
 F(\bs t) = \frac{(i-1)!^2}{(i-1-t_2)!(i+t_3-t_4)!} 
\ \frac{(2i-1-t_2+t_3-t_4)!}{(2i+t_1-t_2+t_3-t_4)!}	.
\end{equation}
Note that there exists an absolute constant $\gamma$ such that for any $i\ge 1$ and $\bs t$, 
\[
F(\bs t) \le \gamma \, i^{-1-t_3+t_2+t_4}\, i^{-1-t_1}=\gamma\, i^{2-2t_1-2t_3}.
\]
Consequently, the sum in~\eqref{n4pi} over the vectors $\bs t=(t_1,t_2,t_3,t_4)$ with $t_1+t_3\le 3$ is only $O(n/i^4)$. 
Also, there are three vectors $\bs t$ with the sum of the first and the third components being 4:
$\bs t_1=(4,0,0,0)$, $\bs t_2=(3,0,1,0)$, and $\bs t_3=(2,0,2,0)$. For these $\bs t$ we have $K(\bs t_1)=1$, $K(\bs t_2)=2$, and $K(\bs t_3)=1$. 
Then,
\begin{equation}\label{n4pi-2}
(n)_4\,p_i = O\left(n/i^4\right)+  \left( 1+O\left(i^2/n\right)\right)(2n)^2 \Big( 24 F(\bs t_1)+12F(\bs t_2)+4F(\bs t_3)\Big).
\end{equation}
Using~\eqref{def,F}, we have
\[
24F(\bs t_1)+12F(\bs t_2)+4F(\bs t_3)	= 4\,\frac{(i-1)!^2}{(2i+4)!}\left[6{2i-1 \choose i}+3{2i\choose i+1}+{2i+1 \choose i+2} \right].			
\]
Using Lemma~\ref{lem:fij} with $i=j$, we get
\[
\left[6{2i-1 \choose i}+3{2i\choose i+1}+{2i+1 \choose i+2} \right] =\frac{(2i+4)!}{2(i+2)!^2},
\]
and consequently,
\[
24F(\bs t_1)+12F(\bs t_2)+4F(\bs t_3)	= \frac{2}{\big((i+2)_3\big)^2}.
\]
Finally, using this equation and~\eqref{n4pi-2}, we get
\begin{equation}\label{Sigma1}
\Sigma_1 =2(n)_4\,p_i= O\left(\frac{n}{i^4}\right)+ \left(1+O\left(\frac{i^2}{n}\right)\right)
\frac{16n^2}{\big((i+2)_3\big)^2}
= \frac{16n^2}{\big((i+2)_3\big)^2}  +  O\left(\frac{n}{i^4}\right).
\end{equation}

\smallskip

\noindent {\bf Computation of $\Sigma_2$.} Let $A'$ denote the event $\{I_{1,2}^{i}=1=I_{2,3}^{i}\}$ in a game with $n$ pairs of cards and let $q=\pr{A'}$.
Thus we have $\Sigma_2= (n)_3\cdot q$ and we need to estimate $q$. On the event $A'$, $1_R$ appears before $2_R$, which appears before $3_R$. These three red cards determine four intervals, $I_1$ through $I_4$, numbered in the order they appear. Both $I_2$ and $I_3$ contain $i-1$ blue cards and no red cards.  For $1\le j\le 3$, let $t_j$ denote the number of elements from $\{1_B,2_B,3_B\}$ contained in $I_j$.
We have the following possibilities for $\bs t=(t_1,t_2,t_3)$:
\[
\bs t_1=(3,0,0); \quad \bs t_2=(2,1,0); \quad \bs t_3=(2,0,1); \quad \bs t_4=(1,2,0); \quad \bs t_5=(1,1,1).
\]
Let $K_u$ be the number of ways to place $\{1_B,2_B,3_B\}$ in $I_1$, $I_2$, and $I_3$ respecting $\bs t_u$. We have
$K_1=1$, $K_2=2$, $K_3=1$, $K_4=1$, and $K_5=1$. Let $k$ denote the length of $I_1$, so the length of $I_4$ is $2n-2i-k-1$. 
We need to put $2i-2-t_2-t_3$ blue cards other than $\{1_B,2_B,3_B\}$ in $I_2\cup I_3$ and we need to put the red counterparts of these blue cards in $I_4$. Hence, 
using Lemma \ref{lem:sums} in the last step below, we have
\begin{align*}
q&=\sum_{u=1}^5\sum_{k} K_u {k \choose t_1}{i-1 \choose t_2}{i-1 \choose t_3}t_1!t_2!t_3! {n-3 \choose 2i-2-t_2-t_3}(2i-2-t_2-t_3)!\\
&\qquad \qquad \times \ {2n-k-2i-1 \choose 2i-2-t_2-t_3} (2i-2-t_2-t_3)!
\ \frac{(2n-4i-2+2t_2+2t_3)!}{2^{n-2i-1+t_2+t_3}} \cdot \frac{2^n}{(2n)!}\\
&=(i-1)!^2\ \sum_{u=1}^{5}K_u t_1! {2i-2-t_2-t_3 \choose i-1-t_2}  (n-3)_{2i-2-t_2-t_3}\\
&\qquad \qquad \qquad \times \frac{2^{2i+1-t_2-t_3}}{(2n)_{4i+2-2t_2-2t_3}}\cdot \sum_{k} {k \choose t_1} {2n-k-2i-1 \choose 2i-2-t_2-t_3}\\
&=(i-1)!^2\ \sum_{u=1}^{5}K_u t_1! {2i-2-t_2-t_3 \choose i-1-t_2}  (n-3)_{2i-2-t_2-t_3} \\
& \qquad \qquad \qquad \times \frac{2^{2i+1-t_2-t_3}}{(2n)_{4i+2-2t_2-2t_3}}\cdot  {2n-2i \choose 2i-1+t_1-t_2-t_3}.
\end{align*}
 Now, using 
\[ 	 2i-1+t_1-t_2-t_3= 2i+2-2t_2-2t_3,		\]
we simplify the last expression and write
\begin{align*}
q&= \sum_{u=1}^{5}K_u t_1! (i-1)!^2\ {2i-2-t_2-t_3 \choose i-1-t_2}  (n-3)_{2i-2-t_2-t_3}\\
 &\qquad \times \frac{2^{2i+1-t_2-t_3}}{(2n)_{2i}}\cdot \frac{1}{(2i+2-2t_2-2t_3)!}\\
&=\sum_{u=1}^{5}O\left( \frac{1}{i^{4-2t_2-2t_3}}\cdot \frac{2^{2i}\, (n)_{2i}}{(2n)_{2i}} \cdot \frac{1}{n^{2+t_2+t_3}}\right) =\sum_{u=1}^{5}O\left( \frac{1}{i^{4-2t_2-2t_3}}\cdot\frac{1}{n^{2+t_2+t_3}} \right){.}
\end{align*}
In the sum above, the main contribution comes from $u=1$, i.e.\ when $(t_1,t_2,t_3)=(3,0,0)$, in which case we have $O(n^{-2}i^{-4})$. In other words, we have
$q= O(n^{-2}i^{-4})$. Then, 
\begin{equation}\label{Sigma2}
\Sigma_2= (n)_3\cdot q= O\left(\frac{n}{i^4}\right).
\end{equation}
Combining 
\eqref{def E[Bi2]}, \eqref{Sigma3}, \eqref{Sigma1}, and
\eqref{Sigma2}, we get \eqref{varB_i<} for some absolute constant $M$. \endproof

As a corollary of the previous two lemmas, we get the following concentration result for
 $B_{n,i}$'s. 
Recall that $\ov{B}_{n,i}=B_{n,i}-\mean{B_{n,i}}$ for $i\ge 1$.

\begin{corollary}\label{cor:concentration B_i}
Let $l=o(n^{1/3})$ and let $\omega\to \infty$. 
Then
\[
\pr{|\ov{B}_{n,1}| \le \sqrt{n\omega}\,,\dots\,,|\ov{B}_{n,\ell}| 
\le \sqrt{n\omega} } =1-O(1/\omega).
\]
\end{corollary}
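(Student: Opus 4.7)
The plan is to apply Chebyshev's inequality to each individual $B_{n,i}$ and then combine the resulting bounds via the union bound. The key structural feature is that the variance bound from Lemma~\ref{lem:varB_i<} decays like $1/i^3$, so even as we sum over $i$ from $1$ to $\ell$, the total remains bounded by a constant times $1/\omega$, independently of $\ell$.

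More precisely, since $\ell = o(n^{1/3})$, every index $1 \le i \le \ell$ falls within the range where Lemma~\ref{lem:varB_i<} applies, giving $\var(B_{n,i}) \le Mn/i^3$ for an absolute constant $M$. Chebyshev's inequality then yields
\[
\pr{|\ov{B}_{n,i}| > \sqrt{n\omega}} \le \frac{\var(B_{n,i})}{n\omega} \le \frac{M}{i^3 \omega}.
\]
Summing over $i = 1, \dots, \ell$ and using $\sum_{i=1}^\infty i^{-3} = \zeta(3) < \infty$, a union bound gives
\[
\pr{\exists\, i \in [\ell]: |\ov{B}_{n,i}| > \sqrt{n\omega}} \le \sum_{i=1}^{\ell}\frac{M}{i^3\omega} \le \frac{M\zeta(3)}{\omega} = O(1/\omega).
\]
Passing to the complementary event yields the claim.

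There is essentially no obstacle here; the corollary is a direct consequence of the variance bound established in Lemma~\ref{lem:varB_i<}. The only point worth noting is that it is the convergence of $\sum 1/i^3$ (rather than, say, merely $\sum 1/i$) that makes the union bound uniform in $\ell$, and this is exactly why the slightly sharper $n/i^3$ bound in the variance estimate is so useful here.
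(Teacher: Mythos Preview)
Your proof is correct and essentially identical to the paper's own argument: both apply Chebyshev's inequality via Lemma~\ref{lem:varB_i<} to get $\pr{|\ov{B}_{n,i}|\ge\sqrt{n\omega}}\le M/(i^3\omega)$ and then sum over $i\le\ell$, using the convergence of $\sum i^{-3}$ to obtain the $O(1/\omega)$ bound.
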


\begin{proof}
Using Lemma~\ref{lem:varB_i<}, and Chebyshev's inequality, for any 
$i \le l$, we get
\[
\pr{\,\big|\ov{B}_{n,i}\big| \ge \sqrt{n\omega}\,} \le {M}/(i^3\omega),
\]
where $M$ is a  constant from Lemma~\ref{lem:varB_i<}.
Hence
\[
\sum_{i=1}^\ell  \pr{\,\big|\ov{B}_{n,i}
\big| \ge \sqrt{n\omega}\,} = O(1/\omega),
\]
from which the corollary follows.
\end{proof}

\section{Lucky moves}

The asymptotics of the expected value of lucky moves in a random game of size $n$ was derived in~\cite[Corollary~18]{VW}. Here we prove Theorem~\ref{thm:lucky}, which gives the limiting distribution of the number of lucky moves as $n\to \infty$. 

\begin{proof}[Proof of Theorem~\ref{thm:lucky}]
Let us write $L$ instead of $L_n$ for the simplicity of notation. We will compute, asymptotically,  the factorial moments $\mean{(L)_k}$ for $k\ge 1$. Fix $k$ and let $\mathcal E$ be the event that the cards numbered $n-k+1,\dots,n$ constitute lucky moves in a random deal in $\T_n$.  Clearly,
\[
\mean{(L)_k} = (n)_k\cdot \pr{\mathcal E} \sim n^k\cdot \pr{\mathcal E}.
\]
Thus, we need to find $\pr{\mathcal E}$ asymptotically. To this end, we use the following algorithm to generate a random deal with $n$ pairs of cards. 

Start with an uncolored deck of cards and put the cards in a row one by one in the following way. At the first round put one of the two cards labeled $1$ in a row. For $k\ge 1$, at round $k+1$, insert a card with label $\lceil (k+1)/2\rceil$ into an interval chosen uniformly at random from the $(k+1)$ intervals determined by the first $k$ cards already in the row, where an interval refers to a space either between two cards, or before the first card, or after the last card. After a pair of identical cards are inserted, which happens after each even-numbered step, color the one on the left with blue and the one on the right with red. We denote by $(a_1,\dots,a_k)$ the permutation after  the $k$-th step of this algorithm, so $(a_1,\dots,a_{2n})$ denotes a random game from $\T_n$.

After $m$ pairs of cards are put down, there are $2m+1$ intervals. Let them be $I_1,\dots,I_{2m+1}$ from left to right. For $m\ge 1$, let $S_0^{(m)}$ denote the set of intervals following red cards in $(a_1,\dots,a_{2m})$ together with the first interval, that is,
\[
S_0^{(m)}:= \{I_{i}: a_{i-1} \text{ is red} \} \cup \{I_1\}.
\]
Recursively, for $t\ge 1$, define
\[
S_t^{(m)}:= \{I_i: a_{i-1} \text{ is blue and } i-1\in S_{t-1}^{(m)} \}.
\]
Let $S^{(m)}:= \cup_{i\ge 0} S_{2i}^{(m)}$ be the set of `good' intervals. A good interval is an interval such that if two identical cards are inserted into this interval, then those two cards constitute a lucky move. Note that the intervals are alternately good and bad as long as a red card does not interfere. The sets $S_i^{(m)}$ partition the set $[2m+1]$ and 
\[
|S^{(m)}| \ge |S_0^{(m)}| =m+1.
\]
Event $\mathcal E$ is equivalent to the following event. Recursively, for $1\le j\le k$, the two $(n-k+j)$'s are inserted next to each other in the algorithm and the first one is inserted into a good interval, i.e.\  into an interval in $S^{(n-k+j-1)}$. Note that, after such an insertion, both the number of good intervals and the number of bad intervals increase by one, i.e.\ $|S^{(n-k+j)}|=|S^{(n-k+j-1)}|+1$. Let $s$ denote the number of good intervals after $n-k$ pairs of cards are placed, that is, $s=|S^{(n-k)}|$. Since $s\ge n-k+1$, conditionally on $s$, the probability of $\mathcal E$ is
\[
\prod_{j=0}^{k-1} \frac{s+j}{{2n-2k+2j+1 \choose 2}+{2n-2k+2j+1}} \sim \left(\frac{s}{2n^2}\right)^k,
\]
where the numerator on the left--hand side is the number of good intervals and the denominator is the number of ways to insert the two cards with labels $n-k+j+1$. 
Since $|S_0^{(n-k)}|= n-k+1$ and $|S_{2i}^{(n-k)}| =\sum_{j\ge 2i+1} B_{n-k,j}\,$ for $i\ge 1$, we have
\begin{equation}\label{eq:s=}
s=\sum_{i\ge 0} |S_{2i}^{(n-k)}| = 1+(n-k) + \sum_{i\ge 1}\sum_{j\ge 2i+1}  B_{n-k,j}.
\end{equation}
Hence, by Proposition~\ref{lem:E[B_i]<>}
  and the identity
\begin{equation}\label{par_frac}
\sum_{j\ge 2i+1}\frac{4}{(j+2)_3}  
=   \sum_{j\ge 2i+1}\lp \frac2j-\frac4{j+1}+\frac2{j+2} \rp =\frac{2}{2i+1}-\frac{2}{2i+2},
\end{equation}
we get 
\begin{align*}
\mean{s} &= n-k+1+ \sum_{i\ge 1}\sum_{j\ge 2i+1} \mean{ B_{n-k,j}}\  \sim \  n+ \sum_{i \ge 1}\sum_{j\ge 2i+1} \frac{4n}{(j+2)_3} \\
&=n+ n\sum_{i\ge 1} \left(\frac{2}{2i+1}-\frac{2}{2i+2}\right)\\
&= 2n\sum_{i\ge 0} \left[\frac{1}{2i+1}-\frac{1}{2i+2}\right]= (2\ln2)n.
\end{align*}
Next we show that $s/\mean{S}\to 1$ in probability. For this, it is enough to show that for every $\eps>0$
\[
\bbP{|s-\mean{s}|>\eps n}\to0,\quad\mbox{as\ }n\to\infty.
\]
This probability is bounded by the sum
\[
\bbP{\Big|\sum_{i=1}^l\sum_{j\ge 2i+1}\ov{B}_{n-k,j}\Big|>\frac{\eps n}2}+ 
\bbP{\Big|\sum_{i>l}\sum_{j\ge 2i+1}\ov{B}_{n-k,j}\Big|>\frac{\eps n}2},
\]
where, for a random variable $X$, we put $\ov{X}=X-\mean{X}$.
Since $\sum_jB_{m,j}=m$ for every $m\ge1$,  we have
\begin{align*}
\mathbb{P}\Big(\, \Big|\sum_{i=1}^l\sum_{j\ge 2i+1}\ov{B}_{n-k,j}\Big|>\frac{\eps n}2\, \Big)
&=\mathbb{P}\Big(\, \Big|\sum_{i=1}^l\sum_{j=1}^{2i}\ov{B}_{n-k,j}\Big|>\frac{\eps n}2\, \Big) 	
&\le\bbP{\exists\ 1\le j\le2l:\ |\ov{B}_{n-k,j}|>\frac{\eps n}{2l(l+1)}}.
\end{align*}
By Corollary~\ref{cor:concentration B_i} applied with $\omega=\eps^2n/(2l(l+1))^2$  the right--hand side above goes to zero  for $l=o(n^{1/4})$. 

Now consider the second probability. Since 
$\bbP{|\ov{X}|>t}\le2\mean{X}/t$ 
for a non--negative random variable $X$ and any $t>0$, using Proposition~\ref{lem:E[B_i]<>} we get
\begin{align*}
\bbP{\Big|\sum_{i>l}\sum_{j\ge 2i+1}\ov{B}_{n-k,j}\Big|>\frac{\eps n}2}
&\le  \frac4{\eps n}\sum_{i>l}\sum_{j\ge2i+1}\mean{B_{n-k,j}}
\le \frac4{\eps n}\sum_{i>l}\sum_{j\ge2i+1}\left(\mean{\xi_j}+\frac {4e^{1/2}n}{(j+2)_3}\right) \\
&\le\frac4{\eps n}\sum_{i>l}\left(\bbP{D_{n-k,1}\ge 2i+1}+\frac {cn}{i^2}\right)
\le \frac4{\eps n}\left(\mean{D_{n-k,1}}+\frac{cn}l\right).
\end{align*}
By Proposition~\ref{prop:A_recur}, $\mean{D_{n-k,1}}=O(\sqrt n)$ so that the right--hand side goes to $0$ as long as $l\to\infty$ with $n\to\infty$.
Thus
\[
\mean{(L)_k} \sim n^k \cdot \left(\frac{(2\ln2) n}{2n^2}\right)^k = (\ln2)^k,
\]
and $L$ converges in distribution to $Pois(\ln 2)$.

\end{proof}

\section{Length of the game}
In this section we prove
Theorem~\ref{thm:main} and thus also the asymptotic normality of the length of the
game (see Corollary~\ref{cor:G_n} in Section~\ref{sec:main}). 
The proof is an application of Theorem~\ref{thm:joint} which will be
discussed in the next section.

We first note that by  using~\eqref{mean Bni=}, \eqref{E_1+E_2}, and the fact that $\sum_{i\ge 1}\mean{\xi_i}=1$, 
and by letting $i_n$ be an integer of order $\sqrt{n/\log n}$ we have
 \begin{align*}\sum_{i\ge1}\mean{B_{n,2i}}&=
 \sum_{i=1}^{i_n}\mean{B_{n,2i}}+\sum_{i> i_n}\mean{B_{n,2i}}=\sum_{i=1}^{i_n}\left(\mean{\xi_{2i}}+
 \frac{4n}{(2i+2)_3}+O\left(\frac1i\right)\right)+O\left(\sum_{i>i_n}\frac n{i^3}\right)\\
 &=\sum_{i=1}^\infty\frac{4n}{(2i+2)_3}+O(\log n)+O\left(\frac n{i_n^2}\right)=
 ( 3-4\ln 2)n+O\left(\log n\right),
\end{align*}
where the last equality follows by the partial fraction decomposition used in~\eqref{par_frac}. Hence,
\[
 \frac1{\sqrt n}\left( Y_n- (3-4\ln 2)n\right)  =  \frac1{\sqrt n}\sum_{i\ge 1}\ov B_{n,2i} +O\left(\frac{\log n}{\sqrt n}\right)
\]
and it suffices to show that the first term on the right--hand side is asymptotically normal.

For $j\ge 1$ let $V_j=\sum_{i=1}^j W_{2i}$ and $V=\sum_{i=1}^\infty W_{2i}$ where $(W_i)$ are defined  in Theorem~\ref{thm:joint}.
Pick any $\eps>0$. Pick $N_0$ such that  for every $N\ge N_0$ we have $\bbP{|V-V_N|>\eps/2}<\eps/3$. 
This is possible since the variance of $V-V_N$ goes to zero as $N\to\infty$ and hence $V-V_N$ goes to zero in probability. 
Take any $x\in \Bbb R$ and any $N\ge N_0$ and pick $n_0$ such that for $n\ge n_0$
\[
\left|\bbP{\frac1{\sqrt n}\sum_{i\le N}\ov B_{n,2i}\le
x+\eps/2}-\bbP{V_{N}\le x+\eps/2}\right|<\eps/3
\]
and
\[\left|\bbP{\frac1{\sqrt n}\sum_{i\le N}\ov B_{n,2i}\le
x-\eps/2}-\bbP{V_{N}\le x-\eps/2}\right|<\eps/3.
\]
This is possible since by Theorem~\ref{thm:joint} there is a joint convergence, and thus also
convergence of any finite sums.

Now, for $n\ge n_0$  we have
\begin{align*}
&\bbP{\frac1{\sqrt n}\sum_i\ov B_{n,2i}\le x}=\bbP{\frac1{\sqrt n}\sum_{i\le
N}\ov B_{n,2i}+\frac1{\sqrt n}\sum_{i>N}\ov B_{n,2i}\le x}\\&\le
\bbP{\frac1{\sqrt n}\sum_{i\le N}\ov B_{n,2i}\le
x+\eps/2}+\bbP{\frac1{\sqrt n}\Big|\sum_{i>N}\ov B_{n,2i}\Big|>\eps/2} \\&\le
\bbP{V_{N}\le x+\eps/2}+\eps/3+\bbP{\frac1{\sqrt n}\Big|\sum_{i>N}\ov
B_{n,2i}\Big|>\eps/2}
\\&\le
\bbP{V\le x+\eps}+2\eps/3+\bbP{\frac1{\sqrt n}\Big|\sum_{i>N}\ov B_{n,2i}\Big|>\eps/2}.
\end{align*}
Similarly, 
\begin{align*}
&\bbP{\frac1{\sqrt n}\sum_i\ov B_{n,2i}\le x}\ge\bbP{\frac1{\sqrt
n}\sum_i\ov B_{n,2i}\le x,\frac1{\sqrt n}\Big|\sum_{i>N}\ov B_{n,2i}\Big|\le \eps/2}\\&\ge
\bbP{\frac1{\sqrt n}\sum_{i\le N}\ov B_{n,2i}\le
x-\eps/2}-\bbP{\frac1{\sqrt n}\Big|\sum_{i>N}\ov B_{n,2i}\Big|>\eps/2} \\&\ge
\bbP{V_{N}\le x-\eps/2}-\eps/3-\bbP{\frac1{\sqrt n}\Big|\sum_{i>N}\ov
B_{n,2i}\Big|>\eps/2}
\\&\ge
\bbP{V\le x-\eps}-2\eps/3-\bbP{\frac1{\sqrt n}\Big|\sum_{i>N}\ov B_{n,2i}\Big|>\eps/2}.
\end{align*}
As we will show below, $n_0$ and $N_0$ can be chosen so that
\begin{equation}\label{B_tail}
\bbP{\frac1{\sqrt n}\Big|\sum_{i>N}\ov
B_{n,2i}\Big|>\eps/2}\le\eps/3\end{equation}
for  $n\ge n_0$ and $N\ge N_0$. 
We then  have
\[\bbP{V\le x-\eps}-\eps\le\bbP{\frac1{\sqrt n}\sum_i\ov B_{n,2i}\le
x}\le\bbP{V\le x+\eps}+\eps.
\]
Letting $\eps\to0$ and using the fact that the distribution of $V$ is
continuous we conclude that 
\[
\bbP{\frac1{\sqrt n}\sum_i\ov B_{n,2i}\le x}
\to\bbP{V\le x}.
\]
Finally, note that $V\stackrel d=N(0,\sigma^2)$ where
\begin{align*}\sigma^2&={\rm
var}\left(\sum_{i\ge1}W_{2i}\right)=\sum_{i\ge1}\sigma_{2i,2i}+2\sum_{1\le
i<j}\sigma_{2i,2j}\\&=4\sum_{i=1}^\infty\frac1{(2i+2)_3}+16\sum_{i,j=1}^\infty\frac1{(2i+2)_3(2j+2)_3}-24\sum_{i,j=1}^\infty\frac1{(2i+2j+4)_4}\\&=
(3-4\ln2)(4-4\ln2)-24\sum_{j=1}^\infty\frac{1}{(4j+4)_4}-48\sum_{j=2}^\infty\sum_{i=1}^{j-1}\frac1{(2i+2j+4)_4}\\&=
(3-4\ln2)(4-4\ln2)-24\left(-\frac1{24}+\frac{\ln2}4-\frac\pi{24}\right)-48\left(\frac{13}{24}+\frac\pi{48}-\frac{7\ln2}8\right)\\&=(4\ln2)^2+8\ln2-13.
\end{align*}
It remains to prove \eqref{B_tail}. Pick $i_n>N$ of order
$n^{(1/3)-\eta}$ where $0<\eta<1/12$.
 The 
left--hand side of \eqref{B_tail}  is bounded by
\begin{equation}\label{split}\bbP{\frac1{\sqrt n}\Big|\sum_{N<i\le i_n}\ov B_{n,2i}\Big|>\frac\eps4}+
\bbP{\frac1{\sqrt n}\Big|\sum_{i> i_n}\ov B_{n,2i}\Big|>\frac\eps4}.
\end{equation}
The first probability by Chebyshev's inequality 
is at most 
\[\frac{16}{\eps^2n}\var\left(\sum_{N<i\le i_n}B_{n,2i}\right)=
\frac{16}{\eps^2n}\left(\sum_{N<i\le i_n}\var(B_{n,2i})+
2\sum_{N<i<j\le i_n}\cov(B_{n,2i},B_{n,2j})\right).
\]
It follows from Lemma~\ref{lem:varB_i<} and the choice of $i_n$ that
 the first sum is  $O(n/N^2)$. By Cauchy--Schwartz
\[
|\cov(B_{n,2i},B_{n,2j})|\le \big(\var(B_{n,2i})\var(B_{n,2j})\big)^{1/2}=O\left(\frac
  n{i^{3/2}j^{3/2}}\right).
\]
Thus, the second sum is of order at most 
\[n\sum_{i>N}\frac1{i^{  3/2}}
\sum_{j>i}\frac1{j^{3/2}}\le
cn\sum_{i>N}\frac1{i^{3/2}}\frac1{i^{1/2}}=O\left(\frac nN\right)
\]
so that the first probability in~\eqref{split} is $O(1/N)$. 
The second probability, by Markov and \eqref{E[B_i]<} is bounded by 
\begin{align*}
&\frac4{\eps\sqrt n}\bbE{|\sum_{i>i_n}\ov B_{n,2i}|}\le
 \frac8{\eps\sqrt n}\bbE{\sum_{i>i_n}B_{n,2i}}
 \le\frac8{\eps\sqrt   n}
\sum_{i>i_n}\left(\bbE{\xi_i}+\frac{4e^{1/2}n}{i(i+1)(i+2)}\right)\\&\quad
=\frac8{\eps\sqrt n}\bbP{D_{n,1}>i_n}+O\left(\frac{\sqrt
    n}{i_n^2}\right)=O(1/\sqrt n)+O\left(n^{2\eta-1/6}\right)=o(1),
\end{align*}
by our choice of $i_n$. Thus, for a given $\eps>0$ each of the
probabilities in \eqref{split} can be made smaller than $\eps/6$ by
choosing $N_0$ and $n_0$ sufficiently large. This implies \eqref{B_tail}.

\section{Proof of Theorem~\ref{thm:joint}}
We consider the following dynamic version of the game. For $k\ge 1$, at step $k$, we insert the pair of cards labeled $k$ into the game so that the position of the blue card is chosen uniformly at random from $2k-1$ positions first and then the red card goes to the end of the row.
We can represent the evolution of blocks 
as a generalized P\'olya urn
model with infinitely many types of balls: $0,1,2,\dots$.  Drawing a ball of type $i\ge1$
corresponds to inserting the blue card 
of the new pair in an existing block of length $i$. 
Drawing a type zero ball represents a
situation when the blue and red cards 
 of  the arriving pair are both put
at the end thus creating a block of size 2. 

The evolution of the urn is as follows: we start with   one ball of type 0. 
If, at any time,  a ball of type $i$, $i\ge1$ is drawn it is replaced by one ball of type 1  and one  ball of type $i+1$. 
If a ball of type $0$ is drawn it is returned to the urn along with  one ball of type 2 and the process is continued in the same manner. 
After $n$ draws, there will be one ball of type 0 and $n$ balls of other types.
With this interpretation, the
number of blocks of length $i$ when $2n$ cards have been played,
$B_{n,i}$, is  the number of balls of type $i$ in the urn after the
$n$th draw and their asymptotic distributions have been studied widely in the literature.

In fact, if not for the ball of type zero it would be {\sl exactly} the
situation of outdegrees in a random plane recursive tree studied by
Janson in \cite{SJ_s} (see Theorem~1.3 in particular) and in a much more general
setting in~\cite {SJ_l} except that his $i+1$ (and $i+2$, $i+3$) are
our $i$, $i+1$, and $i+2$, respectively. 

To deal with this type 0 ball, we wish to  apply  Theorem~2.1 in  \cite{hu+}. 
In that work, a type 0 ball is referred to as an {\em immigration ball}. Let us call the vector $(a_{m,k})_{k=0}^K$ representing the number of balls added to the urn after an immigration ball is drawn the {\em immigration vector}. 
 Our immigration vector $(a_{m,k})_{k=0}^K$ (see item (a) on p.~646 in \cite{hu+}) is time
independent and is given by 
\[
[a_{m,0},\dots,a_{m,k}]=[0,0,1,0,\dots,0]. 
\]
Similarly, the addition (or replacement) matrices 
are time
independent,  and non--random, and the 
number of balls in the urn increases when a non-immigration ball is drawn. 
Thus, our situation falls into the second of the three cases described in \cite[Section~2.3]{hu+}  and in this case, Theorem~2.1 in~\cite{hu+} states that the immigration is asymptotically negligible and the urn has the same asymptotics as  without immigration.

The urn without immigration was studied by Janson~\cite{SJ_l,SJ_s} 
 who reduced infinitely many
types to finitely many and used a `superball trick' (see \cite[Remark~4.2]{SJ_l}).

 After these reductions, in Janson's 
 notation (and including the
immigration balls), if $\xi_i=(\xi_{0,i},\xi_{1,i},\dots,\xi_{M,i})$  is the replacement (column) vector when
a ball of type  $i$ is drawn, then with $\delta_{i,j}$ denoting the Kronecker delta,  
\[\xi_{0,j}=2\delta_{2,j}\quad\mbox{i.e.\ }
\xi_0=(0,0,2,0\dots,0).\]
For $1\le i<M$
\[\xi_{i,j}=\delta_{1,j}-i\delta_{i,j}+(i+1)\delta_{i+1,j}=(0,1,0\dots,0,-i,i+1,0,\dots,0),\]
where $-i$ is on the $(i+1)$st position (we start enumeration at
0). Finally, \[\xi_{M,j}=\delta_{1,j}+\delta_{M,j}=(0,1,0,\dots,0,1).\]
 The replacement matrix $A_0$
(which has vectors $\xi_j$ as its columns) is 
\[
A_0={\bf H'}=\left[\begin{array}{ccccccc}
0&0&0&0&\dots&0&0\\
0&0&1&1&\dots& 1&1\\
2&2&-2&0&\dots&0&0\\
&&\dots&&\dots&&\\
0&0&\dots&0&M-1&-M+1&0\\
0&0&\dots&0&0&M&1
\end{array}\right],
\]
where ${\bf H}$ was the notation used in \cite{hu+}. Notice that if $A$ is a
matrix obtained from $A_0$ by removing its first column and its first
row, then it is exactly the matrix considered by Janson \cite{SJ_s}
for the random plane recursive tree (with $i$ shifted by 1 since he
starts enumeration at $i=0$ and ours, after discarding balls of type
zero, starts at $i=1$). It follows from his argument  (see \cite[proof
of Theorem~1.3]{SJ_s} and Remark~(b) below) that the
eigenvalues of  $A$ are: $2,-1,\dots,-M$  and, consequently, 
 that \eqref{eqn:joint} holds with covariances of $(W_i)$ given  by
\[\sigma_{i,j}=2\sum_{k=0}^{i-1}\sum_{l=0}^{j-1}\frac{(-1)^{k+l}}{k+l+4}{i-1\choose
  k}{j-1\choose l}\left(\frac{2(k+l+4)!}{(k+3)!(l+3)!}-1-\frac{(k+1)(l+1)}{(k+3)(l+3)}\right).\]
 As was shown in \cite[Proposition~1]{AH} this expression simplifies
 to \eqref{eqn:cov}. This proves Theorem~\ref{thm:joint}.

\medskip
\noindent{\bf Remarks:}
\begin{itemize}\item[(a)] 
It is tempting to incorporate the ball of type 0 in the general framework of Janson, i.e.\ to consider
  $A_0$ as the replacement matrix. An additional eigenvalue is $0$ so we are still in the regime ${\rm
Re}(\lambda_2)<\frac12\lambda_1$ so that Janson's general theory \cite{SJ_l} would
apply. Accordingly,  
for the joint  distribution of $(B_{n,i})_{i\ge0}$ we  would have 
\[
\frac1n\left(B_{n,i}\right)_{i=0}^\infty\to\left(0,\left(\frac4{(i+2)_3} \right)_{i=1}^\infty\right),
\ a.s. \quad{\rm as \ }n\to\infty
\]
and  
\[
\frac1{\sqrt{n}}\left(B_{n,0}-1,\Big(B_{n,i}-\frac{4n}{(i+2)_3}\Big)_{i=1}^\infty\right)
\stackrel d\to( W_i)_{i=0}^\infty,\quad{\rm as \ } n\to\infty,
\]
where $(W_i)$ are jointly Gaussian with covariances $\sigma_{i,j}$ 
given by \eqref{eqn:cov} if  $i,j\ge1$ and   $\sigma_{ij}=0$ if 
$ i=0$ or
$j=0$. In
particular $W_0$ is degenerate as it should be. The formal difficulty
 is that  condition (A6) of~\cite{SJ_l} is not satisfied: there are two
equivalence classes  $\{0\}$ and $\{1,2,\dots\}$ and the first one is
dominating while the second is not (a ball of type zero is never put in
unless it is drawn). Probably Janson's theory could be
extended to include such cases, but we do not know this.

\item[(b)] 
 Janson found the eigenvalues and eigenvectors of $A$ by
  directly solving the 
  system of linear
  equations. Alternatively, as he himself suggested 
one can proceed 
by induction.
Indeed, 
 after expanding $A_0-\lambda I$ with respect to the first
row and then  repeatedly expanding the resulting cofactors with respect to their last rows 
we see that the characteristic polynomial of $A_0$ is 
\[p_{A_0}(\lambda)=(-1)^{M-1}\lambda(\lambda-2)\prod_{j=0}^{M-1}(\lambda+j).\]

The eigenvalues then may be computed by 
solving the system $(A_0-\lambda I)v=0$.  For example, when $\lambda=2$ this gives  $v_0=0$ and 
\begin{align*}\lambda v_1&=\sum_{i=2}^Mv_i\\
jv_{j-1}&=(\lambda+j)v_{j},\quad 2\le
j<M,\\Mv_{M-1}&=(\lambda-1)v_M,\end{align*}
which is solved by 
\begin{align*}v_{M-1}&=\frac1Mv_M,\\
v_j&=\frac{j+3}{j+1}v_{j+1}=\frac{(j+3)(j+4)\dots(M+1)}{(j+1)(j+2)\dots(M-1)}v_{M-1}=\frac{{M+1\choose2}}{{j+2\choose2}M}v_{M}
,\quad 1\le j\le M-2.
\end{align*}
Note that the fraction on the right--hand side can be written as
\[\frac{M+1}{(j+1)(j+2)}v_M
\]
and choosing $v_M=2/(M+1)$ normalizes $v$ so that $\sum_jv_j=1$. This
is exactly what Janson gives (see \cite[(5.6)]{SJ_s}) except that our
$j$ is his $i+1$. 

The other eigenvectors (left and right) can be computed in a similar fashion. We skip
further details.
\end{itemize}

\bibliographystyle{plain}

 \end{document}